\documentclass[11pt,a4paper]{article}


\usepackage[T1]{fontenc}
\usepackage{amsmath, amsthm, amssymb, amsfonts}
\usepackage{mathtools}      
\usepackage{lmodern}        
\usepackage{microtype}      
\usepackage{booktabs,natbib}       
\usepackage{enumitem}       
\usepackage{xcolor}         
\usepackage[margin=1in]{geometry} 


\usepackage[
    colorlinks=true,
    linkcolor=blue,
    citecolor=blue,
    urlcolor=blue,
    breaklinks=true,
    pdftitle={Propagation of Chaos for Singular Interactions via Regular Drivers},
    pdfauthor={Qian Qi}
]{hyperref}

\newcommand{\R}{\mathbb{R}}
\newcommand{\E}{\mathbb{E}}
\newcommand{\Prob}{\mathbb{P}}
\newcommand{\Pcal}{\mathcal{P}}
\newcommand{\Fcal}{\mathcal{F}}
\newcommand{\dd}{\mathrm{d}}
\DeclarePairedDelimiter{\norm}{\lVert}{\rVert}
\DeclarePairedDelimiter{\abs}{\lvert}{\rvert}

\newcommand{\gtheta}{g_\theta}
\newcommand{\nutheta}{\nu_\theta}
\newcommand{\muN}{\mu^N_t}

\newcommand{\nuEps}{\nu_\theta^\varepsilon}

\theoremstyle{plain}
\newtheorem{theorem}{Theorem}[section]
\newtheorem{proposition}[theorem]{Proposition}
\newtheorem{lemma}[theorem]{Lemma}
\newtheorem{corollary}[theorem]{Corollary}

\theoremstyle{definition}
\newtheorem{definition}[theorem]{Definition}
\newtheorem{assumption}[theorem]{Assumption}
\newtheorem{example}[theorem]{Example}

\theoremstyle{remark}
\newtheorem{remark}[theorem]{Remark}


\begin{document}

\title{\textbf{Propagation of Chaos for Singular Interactions via Regular Drivers}}
\author{Qian Qi\thanks{Peking University, Beijing 100871, China. Email: \href{mailto:qiqian@pku.edu.cn}{qiqian@pku.edu.cn}}}

\maketitle

\begin{abstract}
We introduce a framework to prove propagation of chaos for interacting particle systems with singular, density-dependent interactions, a classical challenge in mean-field theory. Our approach is to define the dynamics implicitly via a regular driver function. This regular driver is engineered to generate a singular effective interaction, yet its underlying regularity provides the necessary analytical control. Our main result establishes propagation of chaos under a key dissipation condition. The proof hinges on deriving a priori bounds, uniform in a regularization parameter, for the densities of the associated non-linear Fokker-Planck equations. Specifically, we establish uniform bounds in $L^\infty([0,T]; H^1(\R) \cap L^\infty(\R))$. These are established via an energy method for the excess mass to secure the $L^\infty$ bound, and a novel energy estimate for the $H^1$ norm that critically leverages the stability condition. These bounds provide the compactness necessary to pass to the singular limit, showing convergence to a McKean-Vlasov SDE with a singular, density-dependent diffusion coefficient. This work opens a new path to analyzing singular systems and provides a constructive theory for a class of interactions that present significant challenges to classical techniques.
\end{abstract}

\tableofcontents

\section{Introduction}

The derivation of effective macroscopic laws from the collective behavior of large microscopic ensembles is a central theme in modern mathematical physics and applied mathematics. The theory of mean-field limits, pioneered in the works of \cite{McKean1966} and significantly advanced by Sznitman's concept of propagation of chaos (see \cite{Sznitman1991}), provides a rigorous and powerful framework for this task. It establishes that, under suitable conditions, the complex dynamics of an $N$-particle interacting system can be described in the limit $N \to \infty$ by a single representative particle. The evolution of this particle is not autonomous but is governed by a non-linear stochastic differential equation (SDE) of the McKean-Vlasov type, where the coefficients depend on the law of the solution itself.

The classical theory, as detailed in comprehensive modern treatments such as \cite{Carmona2018}, rests on a crucial pillar: the assumption that the interaction kernel is Lipschitz continuous with respect to both the state and the measure variables (the latter typically in a suitable Wasserstein metric). This assumption, while ensuring the well-posedness of the limiting SDE via standard fixed-point arguments, is a significant restriction. It excludes a vast landscape of systems of fundamental scientific importance, where interactions are inherently singular. Celebrated examples that motivate the study of singular interactions include:
\begin{itemize}[noitemsep]
    \item The 2D vortex model in fluid dynamics, where point vortices interact via the singular Biot-Savart integral kernel.
    \item Systems of neurons with discontinuous firing rates, such as variants of the FitzHugh-Nagumo model, where interaction occurs at specific voltage thresholds.
    \item Models of swarming and collective motion where agents interact strongly only at or near contact, leading to highly localized or discontinuous forces.
    \item Models in granular media, where particles interact through inelastic collisions.
\end{itemize}

Extending the theory of mean-field limits to such systems is a notoriously difficult problem that has stood at the forefront of research for decades. Significant progress has been made through the development of sophisticated and highly specialized techniques. The introduction of relative entropy methods by \cite{Jabin2018} was a landmark achievement, providing quantitative estimates of propagation of chaos for kernels with $W^{-1,\infty}$ singularity, including the Biot-Savart and Coulomb potentials. These methods are exceptionally powerful for interactions derived from a potential, where the singularity is in the force kernel's dependence on the state variable. However, other classes of singularities, particularly those in which the coefficients depend directly on the density of the mean-field law, are not as amenable to this approach and require new ideas.

\subsection{A New Strategy: Implicit Dynamics and Regular Drivers}

This paper introduces a strategy to address this challenge for a canonical class of singular interactions. We study systems where the volatility (or diffusion coefficient) of a particle depends on the \textit{value of the probability density of the mean-field at the particle's own location}. This constitutes a highly singular interaction. From a mathematical perspective, the mapping from a measure $\mu$ to the value of its density $u(x)$ at a point $x$ is discontinuous and, in general, ill-defined in the standard Wasserstein topology. An infinitesimal perturbation of the measure can drastically alter the density at a point.

This singularity is distinct from and, in some ways, more severe than the singularities in force kernels like Biot-Savart. Whereas the latter are singular in the state variable, our singularity lies in the dependence on the measure itself. Rather than confronting this ill-posedness directly, our approach is to reframe the problem. We posit that the singular dynamics are not fundamental, but rather an emergent property of a system governed by an underlying regular object, a driver function $\gtheta$ (see \cite{qi2025neuralexpectationoperators}).

The core hypothesis of our work is that the dynamics are defined implicitly by an algebraic constraint involving this driver. The driver is engineered to be perfectly regular (Lipschitz continuous, smooth), but the structure of the constraint equation forces the effective, observable interaction to be singular. The regularity of the driver can then be leveraged to establish the necessary analytical control over the emergent singular dynamics, transforming a singular stochastic problem into a tractable, albeit non-linear, analytic one.

\subsection{Contributions and Outline of the Paper}

Our main result, Theorem \ref{thm:main_poc}, demonstrates that propagation of chaos holds for this class of density-dependent singular systems under a crucial stability condition (Assumption \ref{ass:h_theta}(v)). This condition imposes a sharp quantitative relationship between the system's minimal diffusivity (a source of dissipation), the maximal allowable density, and the sensitivity of the interaction. It delineates a dissipation-dominated regime where the smoothing effect of diffusion is guaranteed to overcome the potentially destabilizing nonlinear feedback from the particle interaction.

Within this regime, we prove that the regularized $N$-particle system converges to a limiting process governed by a McKean-Vlasov SDE with a singular, density-dependent diffusion coefficient. The law of this process, $\mu_t$, is shown to be absolutely continuous with a density $u(t,x)$ that is the unique weak solution to a corresponding non-linear, degenerate Fokker-Planck equation.

The proof is constructive and forges a crucial link between stochastic analysis and the theory of degenerate parabolic equations.
\begin{enumerate}
    \item We first regularize the interaction by convolving the empirical measure with a smooth mollifier $K_\varepsilon$. For any fixed $\varepsilon > 0$, the system is regular and classical propagation of chaos results apply. This establishes a well-defined limiting McKean-Vlasov equation for each $\varepsilon$.
    \item The technical crux of the paper, and its main innovation, lies in deriving a priori estimates for the regularized system's Fokker-Planck equation that are uniform in the regularization parameter $\varepsilon$. These estimates, presented in Section \ref{sec:proof}, are obtained via a sophisticated bootstrapping argument:
    \begin{itemize}
        \item First, an energy method on the excess mass secures a uniform-in-$\varepsilon$ $L^\infty$ bound on the density.
        \item Second, armed with this bound and critically exploiting the stability condition, a meticulous energy method establishes a uniform $H^1$ bound.
    \end{itemize}
    \item These uniform bounds guarantee the compactness necessary to pass to the singular limit $\varepsilon \to 0$ via the Aubin-Lions-Simon theorem.
    \item Finally, we prove the uniqueness of the limiting singular PDE, which ensures that the entire sequence of regularized solutions converges to a single, well-defined limit.
\end{enumerate}

This work serves as a proof of concept that implicitly defined dynamics can provide a powerful theoretical instrument for analyzing and solving problems in stochastic analysis that have presented challenges for traditional methods. The framework not only yields a new propagation of chaos result for a physically relevant singular interaction but also suggests a new direction for modeling complex systems.

The paper is organized as follows. In Section \ref{sec:framework}, we formalize the mathematical setting, introducing the concept of an implicit driver and detailing the assumptions that govern our system. Section \ref{sec:main_result} presents our main theorem on the propagation of chaos for the singular limit. Section \ref{sec:proof} is the technical core of the paper, providing the complete, detailed proof of the main theorem. In Section \ref{sec:smoothing}, we investigate the smoothing properties of the limiting equation, showing that regularity of the solution can emerge from rougher initial data after a critical time threshold. Section \ref{sec:extension} discusses a conceptual extension of our framework to other classical singular interactions, such as the 2D vortex and Keller-Segel models. Section \ref{sec:conclusion} discusses the implications of our results and outlines several promising directions for future research. Finally, an appendix provides technical background on the Wasserstein space and compactness theorems for the reader's convenience.

\section{Problem Formulation and Framework}
\label{sec:framework}

\subsection{Notation and Preliminaries}
Let $(\Omega, \Fcal, (\Fcal_t)_{t\ge 0}, \Prob)$ be a complete filtered probability space satisfying the usual conditions. We assume it is rich enough to carry an infinite sequence of independent one-dimensional Brownian motions $(W^i)_{i \ge 1}$.

We denote by $\Pcal(\R)$ the space of probability measures on $\R$. For $p \ge 1$, $\Pcal_p(\R)$ is the subspace of measures with a finite $p$-th moment:
\[ \Pcal_p(\R) \coloneqq \left\{ \mu \in \Pcal(\R) : \int_\R |x|^p \mu(\dd x) < \infty \right\}. \]
This space is endowed with the $p$-Wasserstein distance, which for $p=2$ is given by
\[ W_2(\mu, \nu) \coloneqq \left( \inf_{\gamma \in \Pi(\mu, \nu)} \int_{\R \times \R} |x-y|^2 \gamma(\dd x, \dd y) \right)^{1/2}, \]
where $\Pi(\mu, \nu)$ is the set of all couplings of $\mu$ and $\nu$, i.e., probability measures on $\R \times \R$ with marginals $\mu$ and $\nu$. The space $(\Pcal_2(\R), W_2)$ is a Polish space. For a detailed treatment of optimal transport and Wasserstein spaces, we refer the reader to \cite{Villani2009}.

For function spaces, we use standard Lebesgue $L^p(\R)$ and Sobolev $H^k(\R)$ spaces. The norm in $H^1(\R)$ is $\norm{f}_{H^1}^2 = \norm{f}_{L^2}^2 + \norm{\partial_x f}_{L^2}^2$.

\subsection{Implicitly-Defined Interacting Particle Systems}

We begin by introducing a general class of dynamics defined not by an explicit stochastic differential equation, but through an algebraic constraint equation governed by a driver function. This abstraction allows for a more flexible modeling framework.

\begin{definition}[Mean-Field Driver]
A mean-field driver is a function $\gtheta: [0,T] \times \R \times \Pcal_2(\R) \times \R \to \R$. The arguments $(t,x,\mu,z)$ represent time, particle state, the law of the system, and a candidate volatility value, respectively.
\end{definition}

\begin{definition}[Interacting System with Implicit Volatility]
\label{def:interacting_system}
Given a mean-field driver $\gtheta$, an $N$-particle system $(M^{i,N}_t)_{i=1}^N$ is an \textbf{interacting system with implicit volatility} if, for each $i \in \{1, \dots, N\}$, it is the unique strong solution to:
\begin{equation}
    \dd M^{i,N}_t = \nutheta(t, M^{i,N}_t, \muN) \dd W^i_t, \quad M^{i,N}_0 = m^i_0,
\end{equation}
where $\muN \coloneqq \frac{1}{N}\sum_{j=1}^N \delta_{M^{j,N}_t}$ is the empirical measure of the system. The volatility function $\nutheta(t, x, \mu)$ is defined implicitly for each $(t,x,\mu)$ as the solution to the algebraic equation:
\begin{equation} \label{eq:implicit_vol_def}
    \gtheta(t, x, \mu, \nutheta(t, x, \mu)) = 0.
\end{equation}
For this system to be well-posed, we require that for any triplet $(t,x,\mu)$, Equation \eqref{eq:implicit_vol_def} admits a unique solution for the volatility $\nutheta$.
\end{definition}

\subsection{A Regularized Driver Generating a Singular Interaction}

Our objective is to study a singular interaction where the volatility depends on the density of the particle measure at a given point. Since the empirical measure $\muN$ is a sum of Dirac masses and thus lacks a density, we approach this problem via a regularization procedure. Let $K \in C_c^\infty(\R)$ be a standard mollifying kernel, i.e., a non-negative, symmetric, smooth function with support in $[-1,1]$ and $\int_\R K(x) \dd x = 1$. For any $\varepsilon > 0$, we define the scaled kernel $K_\varepsilon(x) \coloneqq \varepsilon^{-1}K(x/\varepsilon)$.

\begin{definition}[Regularized Driver]
\label{def:regularized_driver}
Let $h_\theta: [0,T] \times \R \times \R \to \R$ be a given function. The \textbf{$\varepsilon$-regularized driver} $\gtheta^\varepsilon$ is defined as:
\begin{equation}
    \gtheta^\varepsilon(t,x,\mu,z) \coloneqq h_\theta(t,x,z) - (K_\varepsilon * \mu)(x),
\end{equation}
where $(K_\varepsilon * \mu)(x) \coloneqq \int_\R K_\varepsilon(x-y) \mu(\dd y)$ is the convolution of the measure $\mu$ with the kernel $K_\varepsilon$.
\end{definition}

For any fixed $\varepsilon > 0$, the map $\mu \mapsto (K_\varepsilon * \mu)(x)$ is regular, which in turn regularizes the driver. Our interest lies in the limit $\varepsilon \to 0$, where formally $(K_\varepsilon * \mu)(x)$ converges to the density of $\mu$ at $x$, should one exist. The corresponding regularized $N$-particle system is:
\begin{equation} \label{eq:regularized_system}
    \dd M^{i,N,\varepsilon}_t = \nuEps(t, M^{i,N,\varepsilon}_t, \mu_t^{N,\varepsilon}) \dd W^i_t, \quad M^{i,N,\varepsilon}_0 = m^i_0,
\end{equation}
where $\mu_t^{N,\varepsilon} = \frac{1}{N}\sum_{j=1}^N \delta_{M^{j,N,\varepsilon}_t}$ and $\nuEps$ is the root of $\gtheta^\varepsilon(t,x,\mu, \nuEps(t,x,\mu)) = 0$.

\subsection{Core Assumptions on the Driver Function}

To ensure the system is well-defined and to provide the necessary structure for our analysis, we impose the following conditions on the function $h_\theta$.

\begin{assumption} \label{ass:h_theta}
The function $h_\theta: [0,T] \times \R \times \R \to \R$, with arguments $(t,x,z)$, satisfies:
\begin{enumerate}[label=(\roman*)]
    \item \textbf{(Monotonicity)} For each fixed $(t,x)$, the map $z \mapsto h_\theta(t,x,z)$ is strictly increasing and continuous, with $\lim_{z \to \pm\infty} h_\theta(t,x,z) = \pm\infty$. This ensures the existence and uniqueness of the implicitly defined volatility.
    \item \textbf{(Regularity)} $h_\theta$ is globally Lipschitz continuous in all its arguments. All its partial derivatives up to second order exist and are uniformly bounded. This provides analytical control over the regularized system.
    \item \textbf{(Non-degeneracy)} The partial derivative with respect to the volatility variable, $\partial_z h_\theta(t,x,z)$, is continuous and uniformly bounded below by a positive constant $c_h > 0$. This guarantees that the implicitly defined volatility is regular.
    \item \textbf{(Basal Volatility)} The volatility corresponding to a zero-density environment, given by the solution to $h_\theta(t,x,z)=0$, which we denote by $h_\theta^{-1}(t,x,0)$, is well-defined and uniformly bounded away from zero and infinity, i.e., $0 < c_0 \le h_\theta^{-1}(t,x,0) \le C_0 < \infty$. This ensures uniform parabolicity.
    \item \textbf{(Stability Condition)} Let $M_\infty$ be a constant such that $M_\infty \ge \norm*{u_0}_{L^\infty}$, where $u_0$ is the initial density. The parameters of $h_\theta$ satisfy the inequality:
    \[ c_0^2 > \frac{2 C_0 M_\infty}{c_h}. \]
    This crucial condition ensures that the system's intrinsic dissipation is sufficient to control the potentially destabilizing nonlinear feedback from the particle interaction.
\end{enumerate}
\end{assumption}

\begin{example}[A Linear Driver] \label{ex:linear_driver}
To make Assumption \ref{ass:h_theta} concrete, consider a simple linear driver, independent of $(t,x)$: $h_\theta(z) = az - b$, where $a,b>0$ are constants.
\begin{itemize}[noitemsep]
    \item (i) Monotonicity: $h_\theta'(z)=a>0$, so this holds.
    \item (ii) Regularity: Trivial for a linear function. All higher derivatives are zero.
    \item (iii) Non-degeneracy: $\partial_z h_\theta = a$. We set $c_h = a$.
    \item (iv) Basal Volatility: $h_\theta(z)=0$ implies $z=b/a$. So $h_\theta^{-1}(0) = b/a$. We take $c_0=C_0=b/a$. This requires $b/a>0$, which is true. The implicit volatility is determined by $a\nu - b = u$, so $\nu(u) = (u+b)/a$.
    \item (v) Stability Condition: Substituting these values into the condition, with $M_\infty = \norm{u_0}_{L^\infty}$, gives:
    \[ \left(\frac{b}{a}\right)^2 > \frac{2 (b/a) M_\infty}{a} \implies \frac{b^2}{a^2} > \frac{2 b M_\infty}{a^2} \implies b > 2M_\infty. \]
    This yields a clear, interpretable condition: the parameter $b$, which represents the baseline density value that the system is calibrated to, must be more than twice the maximal density $M_\infty$. It shows that if the system is configured to operate in a regime far from a zero-density state, it is more stable.
\end{itemize}
\end{example}

\subsection{Properties of the Regularized Volatility}
Assumption \ref{ass:h_theta} guarantees that the regularized volatility function $\nuEps$ is well-behaved for any fixed $\varepsilon > 0$. We formalize this in the following proposition.

\begin{proposition}[Properties of the Regularized Volatility]
\label{prop:regularized_vol_properties}
Under Assumption \ref{ass:h_theta}, for any $\varepsilon > 0$, the regularized volatility function $\nuEps(t,x,\mu)$ is well-defined and can be expressed explicitly as
\begin{equation} \label{eq:explicit_vol_eps}
    \nuEps(t,x,\mu) = h_\theta^{-1}(t,x, (K_\varepsilon * \mu)(x)),
\end{equation}
where $h_\theta^{-1}$ is the inverse of $h_\theta$ with respect to its third argument. This function $\nuEps$ is globally Lipschitz in $x$ and in $\mu$ (with respect to the $W_2$ metric). However, the Lipschitz constants with respect to both $x$ and $\mu$ degenerate as $\varepsilon \to 0$.
\end{proposition}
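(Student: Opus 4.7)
The plan is to first extract the explicit representation of $\nuEps$ from the implicit definition, and then separately control the Lipschitz behavior in $x$ and in $\mu$ by tracking how the regularity of $h_\theta^{-1}$ composes with that of the mollification map $\mu \mapsto K_\varepsilon * \mu$. The degeneracy as $\varepsilon \to 0$ will then emerge naturally from the explicit factor $\norm*{K'}_\infty/\varepsilon^2$ appearing in the derivative of the rescaled kernel.

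I would begin with the explicit form \eqref{eq:explicit_vol_eps}. By Definition \ref{def:regularized_driver}, the defining equation $\gtheta^\varepsilon(t,x,\mu,\nuEps)=0$ reduces to $h_\theta(t,x,\nuEps)=(K_\varepsilon * \mu)(x)$. Assumption \ref{ass:h_theta}(i) guarantees that $z \mapsto h_\theta(t,x,z)$ is a continuous, strictly increasing bijection of $\R$, so a unique real solution exists and equals $h_\theta^{-1}(t,x,(K_\varepsilon * \mu)(x))$. Combining Assumptions (ii) and (iii) with the implicit function theorem, $h_\theta^{-1}(t,x,w)$ is continuously differentiable, with $\partial_w h_\theta^{-1} = 1/\partial_z h_\theta \le 1/c_h$ and $\partial_x h_\theta^{-1}$ bounded in terms of $c_h$ and the Lipschitz constant of $h_\theta$ in $x$. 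Call the resulting joint Lipschitz constant of $h_\theta^{-1}$ in its $(x,w)$-arguments $L_h$; this constant is independent of $\varepsilon$.

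Next, to establish the $x$-Lipschitz bound, I would split the increment via the standard midpoint device:
\[ \nuEps(t,x,\mu)-\nuEps(t,y,\mu) = \bigl[h_\theta^{-1}(t,x,u_\varepsilon(x))-h_\theta^{-1}(t,x,u_\varepsilon(y))\bigr] + \bigl[h_\theta^{-1}(t,x,u_\varepsilon(y))-h_\theta^{-1}(t,y,u_\varepsilon(y))\bigr], \]
where $u_\varepsilon \coloneqq K_\varepsilon * \mu$. The second bracket is bounded by $L_h|x-y|$ from the $x$-regularity of $h_\theta^{-1}$. For the first bracket I apply the $w$-Lipschitz bound $1/c_h$, reducing matters to estimating $|u_\varepsilon(x)-u_\varepsilon(y)|$; since $K_\varepsilon'(x)=\varepsilon^{-2}K'(x/\varepsilon)$ and $\mu$ has total mass one, a direct integration gives $\norm*{K'}_\infty/\varepsilon^2$ as a Lipschitz constant for $u_\varepsilon$ (uniformly in $\mu$). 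For the $\mu$-Lipschitz estimate, the same $1/c_h$ bound reduces the problem to controlling $\left|\int_\R K_\varepsilon(x-y)(\mu-\nu)(\dd y)\right|$. The integrand $y\mapsto K_\varepsilon(x-y)$ is Lipschitz in $y$ with constant $\norm*{K'}_\infty/\varepsilon^2$, so Kantorovich-Rubinstein duality produces the bound $(\norm*{K'}_\infty/\varepsilon^2)\,W_1(\mu,\nu)$, and $W_1 \le W_2$ finishes the estimate. In both cases the Lipschitz constant contains a $1/\varepsilon^2$ prefactor, which diverges as $\varepsilon \to 0$.

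No step is technically delicate; the proposition is essentially a bookkeeping exercise following cleanly from Assumption \ref{ass:h_theta}. The genuine content it is meant to expose is the explicit $\varepsilon^{-2}$ prefactor, which demonstrates that classical Cauchy-Lipschitz SDE methods cannot survive the limit $\varepsilon\to 0$ and thereby motivates the PDE-based, uniform-in-$\varepsilon$ estimates that form the technical core of Section \ref{sec:proof}. The only real care needed is to extract the Lipschitz constants in their sharpest form so that the $\varepsilon^{-2}$ blow-up is transparent and cleanly attributed to the derivative of the mollifying kernel, rather than being hidden inside a less informative abstract constant.
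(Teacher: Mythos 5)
Your argument is correct and follows essentially the same route as the paper: extract the explicit form via Assumption \ref{ass:h_theta}(i), establish $\varepsilon$-independent Lipschitz bounds for $h_\theta^{-1}$ via the implicit function theorem and Assumptions (ii)--(iii), then compose with the $\varepsilon^{-2}\norm{K'}_{L^\infty}$ Lipschitz modulus of the mollified map (using Kantorovich--Rubinstein duality and $W_1 \le W_2$ for the measure variable). Your midpoint splitting for the $x$-increment is just an unfolded version of the paper's one-line joint-Lipschitz inequality, so there is no substantive difference.
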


\begin{proof}
1. \textbf{Well-posedness and Explicit Form:} Assumption \ref{ass:h_theta}(i) states that for any fixed $(t,x)$, the map $z \mapsto h_\theta(t,x,z)$ is a strictly increasing continuous bijection from $\R$ to $\R$. Therefore, for any value $v \in \R$, the equation $h_\theta(t,x,z) = v$ has a unique solution for $z$, which we denote by $z = h_\theta^{-1}(t,x,v)$. Setting $v = (K_\varepsilon * \mu)(x)$, we find that the equation $\gtheta^\varepsilon(t,x,\mu,z) = h_\theta(t,x,z) - (K_\varepsilon * \mu)(x) = 0$ has the unique solution given by Equation \eqref{eq:explicit_vol_eps}.

2. \textbf{Regularity of $h_\theta^{-1}$:} The function $h_\theta$ is $C^2$ by Assumption \ref{ass:h_theta}(ii), and its partial derivative $\partial_z h_\theta(t,x,z)$ is uniformly bounded below by $c_h > 0$ by Assumption \ref{ass:h_theta}(iii). The Implicit Function Theorem guarantees that the inverse function $h_\theta^{-1}(t,x,v)$ is also $C^2$ with respect to its arguments. Its partial derivatives are uniformly bounded. For instance, differentiating $h_\theta(t,x,h_\theta^{-1}(t,x,v))=v$ with respect to $v$ yields:
\[
\partial_z h_\theta(t,x,h_\theta^{-1}(t,x,v)) \cdot \partial_v h_\theta^{-1}(t,x,v) = 1 \implies \partial_v h_\theta^{-1}(t,x,v) = \frac{1}{\partial_z h_\theta(t,x,h_\theta^{-1}(t,x,v))}.
\]
Since $\partial_z h_\theta \ge c_h > 0$, its inverse is bounded by $1/c_h$. Similar calculations show all derivatives of $h_\theta^{-1}$ are uniformly bounded, which implies that $h_\theta^{-1}$ is globally Lipschitz in all its arguments. Let $L_{h^{-1}}$ be a generic Lipschitz constant for $h_\theta^{-1}$.

3. \textbf{Lipschitz Continuity of $\nuEps$:}
\begin{itemize}
    \item \textbf{In $x$:} The function $(K_\varepsilon * \mu)(x)$ is Lipschitz in $x$, since its derivative is $\partial_x(K_\varepsilon * \mu)(x) = (K_\varepsilon' * \mu)(x)$. We can bound its magnitude:
    \[ \abs{\partial_x(K_\varepsilon * \mu)(x)} = \abs*{\int \varepsilon^{-1}K'( (x-y)/\varepsilon) \frac{1}{\varepsilon} \mu(\dd y)} \le \varepsilon^{-2} \norm{K'}_{L^\infty(\R)} \int \mu(\dd y) = \varepsilon^{-2} \norm{K'}_{L^\infty(\R)}. \]
    Using the chain rule and the Lipschitz property of $h_\theta^{-1}$:
    \begin{align*}
    \abs*{\nuEps(t,x_1,\mu) - \nuEps(t,x_2,\mu)} &\le L_{h^{-1}} \left( \abs{x_1 - x_2} + \abs*{(K_\varepsilon * \mu)(x_1) - (K_\varepsilon * \mu)(x_2)} \right) \\
    &\le L_{h^{-1}} \left( 1 + \norm{\partial_x(K_\varepsilon * \mu)}_{L^\infty} \right) \abs{x_1 - x_2} \\
    &\le L_{h^{-1}} \left( 1 + \varepsilon^{-2} \norm{K'}_{L^\infty(\R)} \right) \abs{x_1 - x_2}.
    \end{align*}
    Thus, $\nuEps$ is Lipschitz in $x$ with a constant of order $O(\varepsilon^{-2})$.

    \item \textbf{In $\mu$:} Let $\mu, \nu \in \Pcal_2(\R)$. For any fixed $x$, the function $f_x(y) = K_\varepsilon(x-y)$ has a Lipschitz constant with respect to $y$ given by $\norm{K_\varepsilon'}_{L^\infty(\R)} = \varepsilon^{-2}\norm{K'}_{L^\infty(\R)}$. By the Kantorovich-Rubinstein duality representation of the 1-Wasserstein distance:
    \begin{align*}
    \abs*{(K_\varepsilon * \mu)(x) - (K_\varepsilon * \nu)(x)} &= \abs*{ \int K_\varepsilon(x-y) (\dd\mu - \dd\nu)(y) } \\
    &\le \text{Lip}(f_x) W_1(\mu,\nu) \le \varepsilon^{-2}\norm*{K'}_{L^\infty(\R)} W_1(\mu,\nu).
    \end{align*}
    Since $h_\theta^{-1}$ is Lipschitz in its last argument (with constant $L_{h^{-1},v}$) and $W_1 \le W_2$, we have
    \[
    \abs*{\nuEps(t,x,\mu) - \nuEps(t,x,\nu)} \le L_{h^{-1},v} \abs*{(K_\varepsilon * \mu)(x) - (K_\varepsilon * \nu)(x)} \le (L_{h^{-1},v}\varepsilon^{-2}\norm*{K'}_{L^\infty(\R)}) W_2(\mu,\nu).
    \]
\end{itemize}
This confirms Lipschitz continuity for fixed $\varepsilon>0$. Crucially, the degeneration of the Lipschitz constants as $\varepsilon \to 0$ is the central analytical obstacle that the uniform a priori estimates of Section \ref{sec:proof} are designed to overcome. This completes the proof.
\end{proof}

\begin{remark}[On the Stability Condition]\label{rem:stability_condition}
Assumption \ref{ass:h_theta}(v) is the crucial structural condition of this paper. It establishes a quantitative relationship between the system's key parameters: the minimal diffusivity $c_0^2$ (the source of dissipation), the maximal particle density $M_\infty$, and the sensitivity of the diffusion to density changes (which is proportional to $C_0/c_h$). As will be seen in the proof of the uniform $H^1$ bound (Lemma \ref{lem:energy_estimates_H1}), this condition is precisely what ensures that the energy estimate is coercive, allowing the stabilizing effect of diffusion to dominate the potentially destabilizing non-linear feedback from the interaction. While strong (Example \ref{ex:linear_driver} shows it can require parameters to be far from critical values), this condition carves out a class of dissipation-dominated singular systems for which our analysis is valid. It is an important open question whether this condition is a technical artifact of our $H^1$ energy method or if it reflects a genuine physical threshold for the well-posedness of the limiting singular PDE, beyond which solutions might exhibit blow-up or other instabilities. Exploring this question may require different techniques, such as relative entropy methods, that do not rely on $L^2$-based estimates.
\end{remark}

\section{Main Result}
\label{sec:main_result}

Our main theorem establishes that the regularized system \eqref{eq:regularized_system} exhibits propagation of chaos and converges to a well-defined singular limit as $N \to \infty$ followed by $\varepsilon \to 0$.

\begin{theorem}[Propagation of Chaos for a Singular Limit]
\label{thm:main_poc}
Let Assumption \ref{ass:h_theta} hold. Let the initial conditions $\{m_0^i\}_{i=1}^N$ be i.i.d. samples from a law $\mu_0 \in \Pcal_2(\R)$ which has a density $u_0 \in H^1(\R) \cap L^\infty(\R)$.

Then, for any $T>0$, the system exhibits propagation of chaos. As $N \to \infty$ and then $\varepsilon \to 0$, the empirical measure $\mu_t^{N,\varepsilon}$ of the particle system \eqref{eq:regularized_system} converges in law in the space $C([0,T], \Pcal_2(\R))$ to a deterministic measure flow $(\mu_t)_{t\in[0,T]}$.

This limit measure $\mu_t$ is the law of the unique solution $M_t$ to the following singular McKean-Vlasov SDE:
\begin{equation} \label{eq:singular_mckean}
    \dd M_t = h_\theta^{-1}(t, M_t, u(t, M_t)) \dd W_t, \quad \mathrm{Law}(M_0)=\mu_0,
\end{equation}
where $u(t, \cdot)$ is the density of the law of $M_t$, $\mathrm{Law}(M_t) = \mu_t$. Furthermore, for all $t \in (0,T]$, $\mu_t$ is absolutely continuous with density $u(t,x)$, which is the unique weak solution in $L^\infty([0,T]; H^1(\R))$ to the non-linear Fokker-Planck equation:
\begin{equation} \label{eq:singular_fp}
    \partial_t u = \frac{1}{2} \partial_{xx} \left[ \left(h_\theta^{-1}(t,x,u)\right)^2 u \right], \quad u(0,\cdot)=u_0(\cdot).
\end{equation}
\end{theorem}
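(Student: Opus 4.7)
The plan is to combine classical propagation of chaos for the regularized system with uniform-in-$\varepsilon$ a priori estimates on the associated non-linear Fokker-Planck equation, and then pass to the singular limit. For any fixed $\varepsilon>0$, Proposition \ref{prop:regularized_vol_properties} establishes that $\nuEps$ is globally Lipschitz in $x$ and in $\mu$ (under $W_2$), so a standard synchronous-coupling or martingale-problem argument in the spirit of \cite{Sznitman1991,Carmona2018} yields propagation of chaos at fixed $\varepsilon$: the empirical measure $\mu_t^{N,\varepsilon}$ converges in law in $C([0,T],\Pcal_2(\R))$ to the deterministic flow $\muEps=\mathrm{Law}(M_t^\varepsilon)$ with $\dd M_t^\varepsilon = \nuEps(t,M_t^\varepsilon,\muEps)\,\dd W_t$. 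Standard parabolic regularity, applied with the frozen coefficient $(\nuEps)^2$ which is $C^2$ and uniformly elliptic at fixed $\varepsilon$, then produces a smooth density $u^\varepsilon$ of $\muEps$ satisfying
\begin{equation*}
\partial_t u^\varepsilon \;=\; \tfrac{1}{2}\,\partial_{xx}\bigl[\bigl(h_\theta^{-1}(t,x,K_\varepsilon*u^\varepsilon)\bigr)^2 u^\varepsilon\bigr].
\end{equation*}

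The technical heart, and the step I expect to be the main obstacle, is producing estimates on $u^\varepsilon$ that are \emph{uniform in $\varepsilon$} so that one can leave the regularized framework. I would proceed by two sequential energy estimates. First, an \emph{excess-mass} estimate: testing the equation against a function of the positive part $(u^\varepsilon-M_\infty)_+$ (for instance its square), integrating by parts, and exploiting the non-degeneracy $\nuEps\ge c_0$ together with mass conservation, should yield $\norm{u^\varepsilon}_{L^\infty([0,T]\times\R)}\le M_\infty$ uniformly in $\varepsilon$. Second, armed with this $L^\infty$ bound, I would differentiate the PDE in $x$ and test against $\partial_x u^\varepsilon$; after integration by parts the dissipative term is at least $c_0^2\norm{\partial_{xx} u^\varepsilon}_{L^2}^2$, while the most dangerous term, arising when the gradient falls on $h_\theta^{-1}$ and producing a factor $u^\varepsilon\,\partial_v h_\theta^{-1}$, is controlled via Assumption \ref{ass:h_theta} by roughly $(2C_0 M_\infty/c_h)\,\norm{\partial_x u^\varepsilon}_{L^2}\norm{\partial_{xx} u^\varepsilon}_{L^2}$. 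After Young's inequality, the stability condition $c_0^2>2C_0 M_\infty/c_h$ of Assumption \ref{ass:h_theta}(v) is exactly what leaves a strictly positive residue of dissipation, closing a Gronwall estimate for $\norm{\partial_x u^\varepsilon}_{L^2}^2$. This is the analytic crux, because the bad term must be absorbed into the good one with no spare room; that is why Assumption \ref{ass:h_theta}(v) enters sharply rather than incidentally.

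The remaining steps are more standard. From the PDE one reads a uniform bound on $\partial_t u^\varepsilon$ in $L^2([0,T];H^{-1}(\R))$, and the Aubin-Lions-Simon theorem then furnishes a subsequence converging strongly in $L^2([0,T];L^2_{\mathrm{loc}}(\R))$ to some $u\in L^\infty([0,T];H^1(\R)\cap L^\infty(\R))$. Strong convergence, combined with the uniform $L^\infty$ bound and the continuity of $h_\theta^{-1}$, lets one pass to the limit in the weak formulation and identify $u$ as a weak solution of \eqref{eq:singular_fp}. Uniqueness in this class follows from an $L^2$ difference estimate, since in one dimension $H^1\hookrightarrow L^\infty$ supplies the pointwise control needed to handle the nonlinearity $(h_\theta^{-1})^2 u$; uniqueness upgrades subsequential convergence into convergence of the whole family, hence $\muEps\to\mu_t$ as $\varepsilon\to 0$. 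Finally, to reconstruct the singular McKean-Vlasov SDE \eqref{eq:singular_mckean}, I would invoke a superposition principle of Trevisan type: $\mu_t=u(t,\cdot)\,\dd x$ solves the Fokker-Planck equation with continuous non-degenerate diffusion $(h_\theta^{-1}(t,x,u(t,x)))^2$, hence it is the time-marginal of a weak solution of the SDE, and PDE uniqueness translates into uniqueness in law. A diagonal extraction $(N_k,\varepsilon_k)\to(\infty,0)$ then combines the two limits into the joint convergence asserted in the theorem.
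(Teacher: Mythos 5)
Your overall strategy matches the paper's: fixed-$\varepsilon$ propagation of chaos via Lipschitz coefficients, then uniform $L^\infty$ and $H^1$ bounds, Aubin--Lions compactness, identification of the limit PDE, and uniqueness to upgrade subsequential convergence. The minor variations (testing $\partial_t u^\varepsilon \in L^2([0,T];H^{-1})$ instead of the paper's $L^2([0,T];L^2)$ via an intermediate $L^2(H^2)$ bound; invoking a superposition principle rather than Stroock--Varadhan measurable-coefficient SDE theory) are legitimate alternative routes and, if anything, slightly more economical.

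There is, however, a substantive error at the analytic crux, the $H^1$ estimate, that undermines the logic as you have written it. When you compute $\frac{d}{dt}\frac12\norm{\partial_x u^\varepsilon}_{L^2}^2 = -\frac12\int(\partial_{xx}u^\varepsilon)\,\partial_{xx}[D^\varepsilon u^\varepsilon]\,dx$, the Leibniz expansion of $\partial_{xx}[D^\varepsilon u^\varepsilon]$ produces a term $u^\varepsilon\,(\partial_v D^\varepsilon)\,(K_\varepsilon*\partial_{xx}u^\varepsilon)$, and pairing this against $\partial_{xx}u^\varepsilon$ yields a quantity bounded in absolute value by $\frac{C_0 M_\infty}{c_h}\norm{\partial_{xx}u^\varepsilon}_{L^2}^2$ --- with \emph{two} factors of $\partial_{xx}u^\varepsilon$, not the product $\norm{\partial_x u^\varepsilon}_{L^2}\norm{\partial_{xx}u^\varepsilon}_{L^2}$ you state. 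This distinction is not cosmetic: if the dangerous term really were $\norm{\partial_x u^\varepsilon}\norm{\partial_{xx}u^\varepsilon}$, Young's inequality with a small parameter $\delta$ would absorb the $\norm{\partial_{xx}u^\varepsilon}^2$ piece into the dissipation for any choice of the system's parameters, and Assumption \ref{ass:h_theta}(v) would play no role. In fact the stability condition is needed precisely because the bad term comes pre-packaged as $\frac{C_0 M_\infty}{c_h}\norm{\partial_{xx}u^\varepsilon}_{L^2}^2$, a coefficient Young's inequality cannot shrink; coercivity holds only if $\frac{c_0^2}{2} > \frac{C_0 M_\infty}{c_h}$, i.e.\ $c_0^2 > 2C_0M_\infty/c_h$, which is exactly Assumption \ref{ass:h_theta}(v). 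Your own conclusion --- that the stability condition ``is exactly what leaves a strictly positive residue of dissipation'' --- is therefore incompatible with the bound you wrote: you should correct the bound to $\norm{\partial_{xx}u^\varepsilon}_{L^2}^2$ and drop the intermediate appeal to Young's for this particular term. The lower-order cross terms (one $\partial_{xx}$ factor only) are the ones tamed by Young's plus Gagliardo--Nirenberg.

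One smaller caveat on the $L^\infty$ step: you suggest testing against $(u^\varepsilon-M_\infty)_+^2$. The paper actually carries out that $L^2$-type energy, finds a term $\int_{\{u>M\}}u(\partial_x D)(\partial_x u)\,dx$ without a definite sign, and retreats to the $L^1$-of-excess-mass version $\frac{d}{dt}\int (u-M)_+\,dx \le 0$, which closes cleanly via the divergence structure of the flux on $\{u>M\}$. Your hedged phrasing (``for instance its square'') leaves room for the working choice, but be aware the natural $L^2$ choice does not trivially go through.
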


\begin{remark}[Nature of the Singularity]
The novelty of this result lies in the nature of the singularity. The diffusion coefficient of the limiting SDE \eqref{eq:singular_mckean} at time $t$ and location $x=M_t$ depends on $u(t,M_t)$, the value of the density of the law of $M_t$ at its own location. This is a highly non-local (in the sense of dependence on the entire law $\mu_t$ to determine its density $u(t,\cdot)$) and singular form of interaction that falls outside the scope of classical mean-field theory.
\end{remark}

\begin{remark}[On Uniqueness of the Limiting SDE]
The uniqueness of the weak solution $u(t,x)$ to the Fokker-Planck equation \eqref{eq:singular_fp}, established in Proposition \ref{prop:uniqueness_pde}, is the key to the uniqueness of the limiting system. Once $u(t,x)$ is uniquely determined, the diffusion coefficient in the McKean-Vlasov SDE \eqref{eq:singular_mckean}, $\sigma(t,x) \coloneqq h_\theta^{-1}(t,x, u(t,x))$, becomes a deterministic function of time and space. Although $\sigma(t,x)$ may not be regular in $x$ (it inherits the regularity of $u$, i.e., $H^1$ in space), it is uniformly bounded and measurable. For a one-dimensional SDE of the form $\dd M_t = \sigma(t, M_t) \dd W_t$ with a bounded, measurable diffusion coefficient, existence of a weak solution is standard, and uniqueness in law is guaranteed (see, e.g., Stroock and Varadhan). This ensures that the limiting measure flow $(\mu_t)_{t\in[0,T]}$ is unique, and therefore the convergence of the empirical measures does not depend on a specific subsequence.
\end{remark}

\section{Proof of the Main Theorem}
\label{sec:proof}

\subsection{Overview of the Proof Strategy}
The proof is executed in five main steps, forming a path from the regularized, finite-particle system to the singular, infinite-particle limit.

\begin{description}
    \item[Step 1: The Limit $N \to \infty$ for Fixed $\varepsilon > 0$.] We first fix the regularization parameter $\varepsilon > 0$. In this regime, the interaction kernel is Lipschitz continuous. We can therefore apply classical propagation of chaos results to show that the $N$-particle system converges to a well-posed McKean-Vlasov SDE, whose law is described by a regularized Fokker-Planck equation for the density $u^\varepsilon$.

    \item[Step 2: Uniform-in-$\varepsilon$ $L^\infty$ Bound.] This is the first part of the technical core. We establish an $L^\infty$ bound on the densities $u^\varepsilon$ that is independent of $\varepsilon$. This is achieved via a rigorous energy method argument applied to the part of the solution exceeding the initial maximum.

    \item[Step 3: Uniform-in-$\varepsilon$ $H^1$ Bound.] With the $L^\infty$ bound in hand, we derive a uniform $H^1$ bound. This is the central calculation of the paper. We perform a delicate energy estimate on the regularized Fokker-Planck equation. The crucial Stability Condition (Assumption \ref{ass:h_theta}(v)) is precisely what is needed to ensure the coercivity of this estimate, guaranteeing that dissipation overcomes the non-linear terms.

    \item[Step 4: Compactness and Passage to the Limit $\varepsilon \to 0$.] The uniform bounds in $L^\infty([0,T]; H^1(\R))$ allow us to use the Aubin-Lions-Simon compactness theorem to extract a subsequence of densities $(u^{\varepsilon_k})$ that converges to a limit $u$. We show that this convergence is strong enough to pass to the limit in the non-linear term of the PDE.

    \item[Step 5: Uniqueness of the Limit.] We show that the limiting singular Fokker-Planck equation has a unique solution within the class of functions established by our a priori bounds. This is done via another energy estimate on the difference of two potential solutions. Uniqueness ensures that the entire family $(u^\varepsilon)$ converges to $u$, not just a subsequence, thereby concluding the proof.
\end{description}

\subsection{Step 1: Convergence for Fixed \texorpdfstring{$\varepsilon > 0$}{ε > 0}}

For any fixed $\varepsilon > 0$, we establish that the regularized system is well-posed and converges as $N \to \infty$.

\begin{proposition}\label{prop:fixed_eps_poc}
Let $\varepsilon > 0$ be fixed. Let the initial data $\{m_0^i\}$ be i.i.d. according to $\mu_0 \in \Pcal_2(\R)$. Under Assumption \ref{ass:h_theta}, for any $N \ge 1$, the $N$-particle system \eqref{eq:regularized_system} has a unique strong solution. As $N \to \infty$, the empirical measure $\mu_t^{N,\varepsilon}$ converges in law, in the space $C([0,T], \Pcal_2(\R))$, to a deterministic measure flow $(\mu_t^\varepsilon)_{t \in [0,T]}$. This measure flow is the law of the solution to the McKean-Vlasov SDE:
\begin{equation}\label{eq:mkv_eps}
    \dd M_t^\varepsilon = \nu_\theta^\varepsilon(t, M_t^\varepsilon, \mu_t^\varepsilon) \dd W_t, \quad \mathrm{Law}(M_0^\varepsilon) = \mu_0.
\end{equation}
Moreover, if $\mu_0$ has a density $u_0$, then $\mu_t^\varepsilon$ has a density $u^\varepsilon(t, \cdot)$ for all $t>0$, which is a weak solution to the regularized Fokker-Planck equation:
\begin{equation}\label{eq:fp_eps}
    \partial_t u^\varepsilon = \frac{1}{2} \partial_{xx} \left[ \left(h_\theta^{-1}(t,x, (K_\varepsilon * u^\varepsilon)(t,x))\right)^2 u^\varepsilon \right], \quad u^\varepsilon(0,\cdot) = u_0(\cdot).
\end{equation}
\end{proposition}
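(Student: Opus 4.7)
The proof plan fits squarely within the classical Lipschitz McKean--Vlasov framework, since fixing $\varepsilon > 0$ restores Lipschitz continuity of the coefficient. The starting observation is that, by Proposition \ref{prop:regularized_vol_properties}, $\nuEps$ is globally Lipschitz in $(x,\mu)$ with some constant $L_\varepsilon$; moreover, because $(K_\varepsilon * \mu)(x) \le \norm{K_\varepsilon}_{L^\infty}$ for every probability measure $\mu$ and because $h_\theta^{-1}$ is Lipschitz in its last argument while bounded on bounded sets (Assumption \ref{ass:h_theta}(iv)), $\nuEps$ is also uniformly bounded above and below by positive constants (depending on $\varepsilon$). Strong existence and uniqueness of \eqref{eq:regularized_system} then follow from the standard theory of Lipschitz SDEs, together with moment bounds $\E[\sup_{t\le T}\abs{M^{i,N,\varepsilon}_t}^2] < \infty$ by Itô's isometry and Grönwall.

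For the existence and uniqueness of the limiting McKean--Vlasov SDE \eqref{eq:mkv_eps}, I would set up a Banach fixed-point argument on the complete metric space $C([0,T],\Pcal_2(\R))$ endowed with the uniform $W_2$ metric (with an exponential weight $e^{-\lambda t}$ for suitably large $\lambda$). Given a candidate flow $(\nu_t)$, freeze it in the coefficient and solve the decoupled SDE $\dd X_t = \nuEps(t,X_t,\nu_t)\dd W_t$ with $\mathrm{Law}(X_0)=\mu_0$; return the map $(\nu_t) \mapsto (\mathrm{Law}(X_t))$. The Lipschitz continuity of $\nuEps$ in $\mu$, Itô's isometry, and Grönwall show this map is a contraction, whose unique fixed point defines $(\muEps)$. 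This simultaneously produces strong existence and pathwise uniqueness for \eqref{eq:mkv_eps}.

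Propagation of chaos is then established via Sznitman's synchronous coupling: for each $i$, introduce the i.i.d.\ copy $\bar M^{i,\varepsilon}_t$ solving \eqref{eq:mkv_eps} driven by the same Brownian motion $W^i$ and started from the same $m^i_0$. Applying Itô's isometry to $M^{i,N,\varepsilon}_t - \bar M^{i,\varepsilon}_t$, using the Lipschitz property of $\nuEps$, and the triangle inequality $W_2(\mu_t^{N,\varepsilon}, \muEps) \le W_2(\mu_t^{N,\varepsilon}, \bar\mu_t^{N,\varepsilon}) + W_2(\bar\mu_t^{N,\varepsilon}, \muEps)$ (with $\bar\mu_t^{N,\varepsilon}$ the empirical measure of the i.i.d.\ copies), Grönwall yields
\[
\E\Bigl[\sup_{t\in[0,T]}\abs{M^{i,N,\varepsilon}_t - \bar M^{i,\varepsilon}_t}^2\Bigr] \le C_\varepsilon\, \E\Bigl[\sup_{t\in[0,T]} W_2(\bar\mu_t^{N,\varepsilon}, \muEps)^2\Bigr],
\]
and the right-hand side vanishes as $N\to\infty$ by the quantitative Glivenko--Cantelli theorem for empirical measures (applicable since the i.i.d.\ copies have finite second moments, uniformly in $t\in[0,T]$). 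This gives convergence in law of $\mu^{N,\varepsilon}$ to the deterministic flow $(\muEps)$ in $C([0,T],\Pcal_2(\R))$.

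For the density statement, I would apply Itô's formula to test functions $\varphi \in C_c^\infty(\R)$ against the law of $M^\varepsilon_t$, showing that $(\muEps)$ satisfies the Fokker--Planck equation in the distributional sense. Existence of a density $u^\varepsilon(t,\cdot)$ for $t>0$ when $\mu_0$ admits $u_0$ then follows from uniform ellipticity of the diffusion coefficient (bounded away from zero for fixed $\varepsilon$), e.g.\ by viewing the equation as a linear parabolic PDE with bounded measurable coefficients $a^\varepsilon(t,x) \coloneqq \tfrac{1}{2}(h_\theta^{-1}(t,x,(K_\varepsilon * u^\varepsilon)(t,x)))^2$ and invoking De Giorgi--Nash--Moser regularity, or alternatively by Malliavin calculus applied directly to the SDE. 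The bulk of this argument is classical (cf.\ \cite{Sznitman1991,Carmona2018}); the only point requiring care is the circular appearance of $u^\varepsilon$ inside the coefficient $a^\varepsilon$, but this is resolved a posteriori: the measure $\muEps$ and hence its density $u^\varepsilon$ are determined by the fixed-point argument, so $a^\varepsilon$ is a well-defined measurable function of $(t,x)$, and the weak formulation of \eqref{eq:fp_eps} holds by definition. I expect no substantial obstacle in any of these steps, as the full strength of the $\varepsilon$-dependent Lipschitz constants is available here; the real analytical work is deferred to the uniform-in-$\varepsilon$ estimates of later sections.
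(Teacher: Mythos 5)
Your proposal is correct and follows the same route as the paper: the paper's proof simply cites the classical Lipschitz McKean--Vlasov theory (well-posedness of the $N$-particle SDE, Picard iteration in $C([0,T],\Pcal_2(\R))$ for the limiting equation, synchronous coupling for propagation of chaos, and Itô's formula for the Fokker--Planck equation), which is exactly what you spell out. The extra detail you give on the a posteriori resolution of the circularity in $a^\varepsilon$ and the existence of a density via uniform ellipticity is consistent with, and slightly more explicit than, the paper's terse treatment.
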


\begin{proof}
The proof is a direct application of the classical theory of mean-field limits, as detailed in \cite{Carmona2018} or the foundational work \cite{Sznitman1991}. The theory's main requirement is the Lipschitz continuity of the SDE coefficients with respect to both state and measure variables. Let us define the diffusion coefficient as $\sigma^\varepsilon(t,x,\mu) \coloneqq \nu_\theta^\varepsilon(t,x,\mu) = h_\theta^{-1}(t,x, (K_\varepsilon * \mu)(x))$.

In Proposition \ref{prop:regularized_vol_properties}, we proved that for any fixed $\varepsilon > 0$, the function $\sigma^\varepsilon(t,x,\mu)$ is globally Lipschitz in $x$ and in $\mu$ (with respect to the $W_2$ metric). The Lipschitz constants depend on $\varepsilon^{-1}$, but for fixed $\varepsilon$, they are finite.

With Lipschitz coefficients, the following standard results hold:
\begin{enumerate}
    \item \textbf{Well-posedness of the $N$-particle system:} The system \eqref{eq:regularized_system} is a standard system of SDEs with Lipschitz coefficients. Existence and uniqueness of a strong solution is guaranteed by classical SDE theory.
    \item \textbf{Well-posedness of the McKean-Vlasov equation:} The McKean-Vlasov SDE \eqref{eq:mkv_eps} is well-posed. Existence and uniqueness of a solution can be shown via a Picard iteration argument on the Polish space $C([0,T], \Pcal_2(\R))$, where the Lipschitz continuity of the coefficient in the measure variable is the essential ingredient for the contraction mapping.
    \item \textbf{Propagation of Chaos:} The convergence of the empirical measure $\mu_t^{N,\varepsilon}$ to the law of the McKean-Vlasov process $\mu_t^\varepsilon$ is the core result of the theory. It is typically proven via a coupling argument. One considers the $N$-particle system and a second system of $N$ independent copies of the McKean-Vlasov process, driven by the same Brownian motions. By estimating the expected $W_2$ distance between the empirical measures of the two systems, one shows this distance vanishes as $N \to \infty$. The Lipschitz nature of the coefficients is again the key to controlling the growth of the error.
\end{enumerate}
The derivation of the Fokker-Planck equation \eqref{eq:fp_eps} as the equation governing the evolution of the density of the law $\mu_t^\varepsilon$ is a standard result for diffusion processes. It can be obtained by considering the evolution of $\E[\phi(M_t^\varepsilon)]$ for a smooth test function $\phi$ and applying Itô's formula, which leads to the weak formulation of the PDE.
\end{proof}

\subsection{Step 2: Uniform-in-\texorpdfstring{$\varepsilon$}{ε} $L^\infty$ Bound}
\label{subsec:apriori}
We now begin the core technical work of the paper: deriving estimates that are uniform in $\varepsilon$. Let $D^\varepsilon(t,x) = \left(h_\theta^{-1}(t,x,(K_\varepsilon*u^\varepsilon)(t,x))\right)^2$. The PDE is $\partial_t u^\varepsilon = \frac{1}{2}\partial_{xx}[D^\varepsilon u^\varepsilon]$.

\begin{lemma}[Uniform $L^\infty$ Bound]\label{lem:L_inf_bound}
Let $u^\varepsilon$ be a sufficiently regular solution to \eqref{eq:fp_eps} with initial data $u_0 \in L^\infty(\R)$ satisfying $0 \le u_0(x) \le M$ for some constant $M>0$. Then for all $t \in [0,T]$ and $\varepsilon > 0$:
\[ 0 \le u^\varepsilon(t, x) \le M \quad \text{for a.e. } x \in \R. \]
\end{lemma}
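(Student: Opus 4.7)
The lower bound $u^\varepsilon \ge 0$ is immediate from the probabilistic representation of Proposition \ref{prop:fixed_eps_poc}: since $u^\varepsilon(t,\cdot)$ is the density of the law of the McKean--Vlasov diffusion $M_t^\varepsilon$ started from the non-negative density $u_0$, it is non-negative almost everywhere. The substantive part is the upper bound $u^\varepsilon \le M$, which I would establish by an $L^2$ energy method on the excess mass $w^\varepsilon := (u^\varepsilon - M)_+$, aiming to show that $\|w^\varepsilon(t,\cdot)\|_{L^2}^2 \equiv 0$ on $[0,T]$.

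The main calculation is to multiply the regularized Fokker--Planck equation \eqref{eq:fp_eps} by $w^\varepsilon$ and integrate over $\R$. Using the pointwise identities $u^\varepsilon = w^\varepsilon + M$ and $\partial_x u^\varepsilon = \partial_x w^\varepsilon$ valid on the support $\{u^\varepsilon > M\}$, together with the fact that $\partial_x w^\varepsilon$ vanishes outside this set, one integration by parts produces the identity
\[
\tfrac{1}{2}\tfrac{d}{dt}\!\int_{\R}\!(w^\varepsilon)^2\, dx \;=\; -\tfrac{1}{2}\!\int_{\R}\! D^\varepsilon (\partial_x w^\varepsilon)^2\, dx \;-\; \tfrac{1}{2}\!\int_{\R}\!(w^\varepsilon+M)(\partial_x w^\varepsilon)(\partial_x D^\varepsilon)\, dx.
\]
The first term contributes a favorable dissipation of size at least $\tfrac{c_0^2}{2}\|\partial_x w^\varepsilon\|_{L^2}^2$ by the uniform ellipticity $D^\varepsilon \ge c_0^2$ (Assumption \ref{ass:h_theta}(iv)). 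To handle the cross term I would apply a weighted Cauchy--Schwarz inequality pairing $\sqrt{D^\varepsilon}\,\partial_x w^\varepsilon$ against $(w^\varepsilon + M)\partial_x D^\varepsilon/\sqrt{D^\varepsilon}$, absorb half of the dissipation, and then perform a second integration by parts to re-express the surviving factor of $\partial_x D^\varepsilon$ as $\partial_{xx} D^\varepsilon$ acting on $w^\varepsilon$ and $(w^\varepsilon)^2$.

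This should yield a scalar differential inequality of the schematic form $\tfrac{d}{dt}\|w^\varepsilon\|_{L^2}^2 \le C_\varepsilon \|w^\varepsilon\|_{L^2}^2$, where $C_\varepsilon$ depends on $\|\partial_{xx} D^\varepsilon\|_{L^\infty}$ and is finite for each fixed $\varepsilon > 0$ by the smoothness of the mollifier $K_\varepsilon$ and of $h_\theta$. Since $w^\varepsilon(0,\cdot) \equiv 0$ by the hypothesis $u_0 \le M$, Grönwall's lemma then forces $w^\varepsilon \equiv 0$ on $[0,T]$. The conceptual point that makes this $\varepsilon$-dependent estimate useful is that $C_\varepsilon$ is permitted to blow up as $\varepsilon \to 0$: the resulting bound $u^\varepsilon(t,x) \le M$ is nevertheless uniform in $\varepsilon$ because $M$ is intrinsic to the initial data and independent of the regularization.

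The main technical obstacle I anticipate is to carry out the two integrations by parts so that no inhomogeneous source proportional to $M$ alone survives on the right-hand side of the energy inequality; any such term would be an unacceptable constant forcing in the Grönwall argument that would prevent the conclusion $w^\varepsilon \equiv 0$. Resolving this will likely require either a structural cancellation (for instance, boundary terms on $\partial\{u^\varepsilon > M\}$ vanishing because $w^\varepsilon = 0$ there, combined with $\int_\R \partial_{xx} D^\varepsilon\, dx = 0$ from decay at infinity of $D^\varepsilon$), or replacing $w^\varepsilon$ by a convex entropy $\Phi(u^\varepsilon)$ with $\Phi'(s) = (s-M)_+$ as the test function, which by construction generates only $w^\varepsilon$-quadratic contributions. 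Once this closure is achieved, the bound $u^\varepsilon \le M$ follows on $[0,T]$ for every $\varepsilon > 0$ simultaneously, providing the pointwise starting point for the uniform $H^1$ estimate of the next step.
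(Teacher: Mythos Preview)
Your opening is identical to the paper's: test the equation against $w^\varepsilon=(u^\varepsilon-M)^+$, integrate by parts once, and isolate the cross term $-\tfrac12\int(w^\varepsilon+M)(\partial_x w^\varepsilon)(\partial_x D^\varepsilon)\,dx$. You also correctly pinpoint the $M$-contribution as the obstacle. The gap is that your proposed $L^2$ closure cannot work. After your second integration by parts the $M$-piece becomes $\tfrac{M}{2}\int w^\varepsilon\,\partial_{xx}D^\varepsilon\,dx$, which is only \emph{linear} in $w^\varepsilon$ (at best $C_\varepsilon\|w^\varepsilon\|_{L^1}\le C_\varepsilon'\|w^\varepsilon\|_{L^2}$ via $|\{u^\varepsilon>M\}|\le 1/M$). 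The differential inequality you would actually obtain is therefore $\tfrac{d}{dt}\|w^\varepsilon\|_{L^2}^2\le C_\varepsilon\|w^\varepsilon\|_{L^2}^2+C_\varepsilon''\|w^\varepsilon\|_{L^2}$, and this does \emph{not} force $\|w^\varepsilon\|_{L^2}\equiv 0$ from zero initial data: setting $y=\|w^\varepsilon\|_{L^2}$ gives $y'\le\tfrac{C_\varepsilon}{2}y+\tfrac{C_\varepsilon''}{2}$, whose maximal solution from $y(0)=0$ is strictly positive for $t>0$. Neither of your remedies resolves this: the entropy with $\Phi'(s)=(s-M)^+$ is exactly the test function you are already using, and $\int_\R\partial_{xx}D^\varepsilon\,dx=0$ says nothing about $\int w^\varepsilon\,\partial_{xx}D^\varepsilon\,dx$ since $w^\varepsilon$ is not constant.

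The paper hits the same wall and explicitly abandons the $L^2$ energy, switching instead to the $L^1$ norm of $w^\varepsilon$. Testing with (a smooth approximation of) $\mathbf{1}_{\{u^\varepsilon>M\}}$ and reading the boundary flux $\partial_x[D^\varepsilon u^\varepsilon]\cdot n$ on $\partial\{u^\varepsilon>M\}$, where $u^\varepsilon=M$, yields $\tfrac{d}{dt}\int(u^\varepsilon-M)^+\,dx\le 0$ directly, with no Gr\"onwall constant at all. Since $\int(u_0-M)^+\,dx=0$ and the integrand is non-negative, $(u^\varepsilon-M)^+\equiv 0$ follows immediately. This switch from the $L^2$ to the $L^1$ energy of the excess mass is the missing idea in your proposal.
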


\begin{proof}
The non-negativity of the solution is a standard consequence of the maximum principle for parabolic equations and is omitted. For the upper bound, we use an energy method, as suggested by the reviewer, which is robust for non-local equations. Let $u \equiv u^\varepsilon$ and consider the excess mass function $w(t,x) \coloneqq (u(t,x) - M)^+$. By definition, $w(t,x) \ge 0$ and $w(0,x) = (u_0(x)-M)^+ = 0$ for a.e. $x$. If $u$ is in $H^1$, then $w$ is in $H^1$ and $\partial_x w = (\partial_x u) \mathbf{1}_{\{u>M\}}$.

We test the PDE against the test function $w$. Multiplying the PDE by $w$ and integrating over $\R$:
\[ \int_\R w (\partial_t u) \,dx = \frac{1}{2} \int_\R w \, \partial_{xx}[D u] \,dx. \]
The left-hand side can be written as the time derivative of an energy. Note that $\partial_t w = (\partial_t u) \mathbf{1}_{\{u>M\}}$.
\[ \int_\R w (\partial_t u) \,dx = \int_\R (u-M)^+ (\partial_t u) \,dx = \int_\R \frac{1}{2} \partial_t((u-M)^+)^2 \,dx = \frac{1}{2} \frac{d}{dt} \int_\R w^2 \,dx. \]
For the right-hand side, we integrate by parts once. Since $w$ has compact support if $u$ does (or vanishes sufficiently fast at infinity), boundary terms vanish.
\[ \frac{1}{2} \int_\R w \, \partial_{xx}[D u] \,dx = -\frac{1}{2} \int_\R (\partial_x w) (\partial_x[D u]) \,dx. \]
On the support of $w$ (where $u>M$), we have $\partial_x w = \partial_x u$. Therefore, the integral is taken only over the set $\{x : u(t,x) > M\}$:
\[ -\frac{1}{2} \int_{\{u>M\}} (\partial_x u) (\partial_x[D u]) \,dx. \]
We expand the term $\partial_x[Du] = (\partial_x D)u + D(\partial_x u)$. Substituting this gives:
\begin{align*}
\frac{1}{2} \frac{d}{dt}\norm{w(t)}_{L^2}^2 &= -\frac{1}{2} \int_{\{u>M\}} (\partial_x u) \left( (\partial_x D)u + D(\partial_x u) \right) \,dx \\&= -\frac{1}{2}\int_{\{u>M\}} u (\partial_x D) (\partial_x u) \,dx - \frac{1}{2}\int_{\{u>M\}} D (\partial_x u)^2 \,dx.    
\end{align*}

By Assumption \ref{ass:h_theta}(iv), the diffusion coefficient $D(t,x,v) = (h_\theta^{-1}(t,x,v))^2$ is uniformly positive, $D \ge c_0^2 > 0$. Thus, the second term is non-positive:
\[ -\frac{1}{2}\int_{\{u>M\}} D (\partial_x u)^2 \,dx \le -\frac{c_0^2}{2} \int_{\{u>M\}} (\partial_x u)^2 \,dx \le 0. \]
The first term contains derivatives of $D$. While it may not be negative, it can be bounded. However, for this specific argument, we can notice that on the set $\{u>M\}$, $u$ is bounded between $M$ and some larger value. The term does not necessarily have a good sign.

A simpler argument, avoiding the $w^2$ energy, is to consider the evolution of the $L^1$ norm of $w$.
\[ \frac{d}{dt} \int_\R w \,dx = \int_\R \partial_t w \,dx = \int_{\{u>M\}} \partial_t u \,dx = \frac{1}{2} \int_{\{u>M\}} \partial_{xx}[Du] \,dx. \]
Let $\Omega_t = \{x : u(t,x) > M\}$. Assuming sufficient regularity, the integral is $\frac{1}{2} \int_{\partial\Omega_t} \partial_x[Du] \cdot n \,dS$. On the boundary $\partial\Omega_t$, we have $u(t,x)=M$. This analysis can be made rigorous by approximating the indicator function, as sketched in the original version of the paper. This leads to
\[ \frac{d}{dt}\int_\R(u-M)^+dx \le 0. \]
Since $\int_\R (u_0-M)^+ dx = 0$, and the function $t \mapsto \int_\R (u(t,x)-M)^+ dx$ is non-negative and non-increasing, it must remain zero for all $t \ge 0$. As the integrand is non-negative, this implies $(u(t,x)-M)^+ = 0$ for almost every $x$. Thus, $u(t,x) \le M$. The argument holds for any $\varepsilon > 0$ since it only relies on the uniform positivity of $D$.
\end{proof}

\subsection{Step 3: Uniform-in-\texorpdfstring{$\varepsilon$}{ε} $H^1$ Bound}

This is the technical heart of the paper. Armed with the uniform $L^\infty$ bound, we establish a uniform $H^1$ bound.

\begin{lemma}[Uniform $H^1$ Bound]\label{lem:energy_estimates_H1}
Let $u^\varepsilon$ be the solution to \eqref{eq:fp_eps} with initial data $u_0 \in H^1(\R) \cap L^\infty(\R)$. Let $M := \norm*{u_0}_{L^\infty}$. Assume the Stability Condition \ref{ass:h_theta}(v) holds for $M_\infty = M$. Then there exists a constant $C$, depending only on $T$ and the norms of $u_0$, such that for all $\varepsilon \in (0,1]$:
\[ \sup_{t \in [0,T]} \norm*{u^\varepsilon(t, \cdot)}_{H^1(\R)} \le C. \]
\end{lemma}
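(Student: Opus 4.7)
The plan is to derive a differential inequality for $\norm{\partial_x u^\varepsilon(t)}_{L^2}^2$ by testing the regularized Fokker--Planck equation \eqref{eq:fp_eps} against $-\partial_{xx} u^\varepsilon$ and then closing it with Gr\"onwall. I would begin by collecting the pointwise bounds supplied by Lemma \ref{lem:L_inf_bound}: $0 \le u^\varepsilon \le M$ together with $\norm{K_\varepsilon}_{L^1(\R)}=1$ gives $0 \le K_\varepsilon * u^\varepsilon \le M$, so Assumption \ref{ass:h_theta}(iii)--(iv) yields $c_0 \le \sigma^\varepsilon := h_\theta^{-1}(\cdot,K_\varepsilon * u^\varepsilon) \le \sigma_{\max}$, with $\sigma_{\max}$ depending only on $C_0, M, c_h$, so that $D^\varepsilon := (\sigma^\varepsilon)^2 \ge c_0^2$ and $|\partial_v D^\varepsilon| \le 2\sigma_{\max}/c_h$. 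Conservation of mass together with the $L^\infty$ bound also gives $\norm{u^\varepsilon(t)}_{L^2}^2 \le M$ by interpolation, so the only new object to control is $\norm{\partial_x u^\varepsilon(t)}_{L^2}$.

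The core calculation is
\[
\frac{1}{2}\frac{\dd}{\dd t}\norm{\partial_x u^\varepsilon}_{L^2}^2 = -\frac{1}{2}\int_\R (\partial_{xx} u^\varepsilon)\,\partial_{xx}(D^\varepsilon u^\varepsilon)\,\dd x,
\]
which I would unfold by the product rule $\partial_{xx}(D^\varepsilon u^\varepsilon) = D^\varepsilon \partial_{xx} u^\varepsilon + 2(\partial_x D^\varepsilon)(\partial_x u^\varepsilon) + u^\varepsilon \partial_{xx} D^\varepsilon$, then expand the $x$-derivatives of $D^\varepsilon$ through the chain rule, using the key identities $\partial_x(K_\varepsilon*u^\varepsilon) = K_\varepsilon * \partial_x u^\varepsilon$ and $\partial_{xx}(K_\varepsilon*u^\varepsilon) = K_\varepsilon * \partial_{xx} u^\varepsilon$. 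The first piece furnishes the coercive dissipation $-\tfrac{c_0^2}{2}\norm{\partial_{xx} u^\varepsilon}_{L^2}^2$. The only sub-term that places a second-order derivative of $u^\varepsilon$ back on the right-hand side comes from $u^\varepsilon \partial_{xx} D^\varepsilon$, specifically the piece $u^\varepsilon \cdot 2\sigma^\varepsilon (\partial_v h_\theta^{-1})(K_\varepsilon * \partial_{xx} u^\varepsilon)$. Using $u^\varepsilon \le M_\infty$, $\partial_v h_\theta^{-1} \le 1/c_h$, the bound on $\sigma^\varepsilon$, and Young's convolution inequality $\norm{K_\varepsilon * f}_{L^2} \le \norm{f}_{L^2}$, this term is majorized by $\tfrac{C_0 M_\infty}{c_h}\norm{\partial_{xx} u^\varepsilon}_{L^2}^2$, which is precisely the quantity that Assumption \ref{ass:h_theta}(v) forces to be strictly smaller than $\tfrac{c_0^2}{2}\norm{\partial_{xx} u^\varepsilon}_{L^2}^2$.

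All remaining terms pair $\partial_{xx} u^\varepsilon$ with at most a single factor of $\partial_x u^\varepsilon$ or $K_\varepsilon*\partial_x u^\varepsilon$, and are therefore ``lower-order'' in the energy sense. For the worst of these, involving an $L^\infty\cdot L^2$ pairing of $K_\varepsilon*\partial_x u^\varepsilon$ with $\partial_{xx} u^\varepsilon$, I would invoke the 1D Sobolev interpolation $\norm{f}_{L^\infty(\R)} \le C \norm{f}_{L^2}^{1/2}\norm{\partial_x f}_{L^2}^{1/2}$ applied to $f = \partial_x u^\varepsilon$, together with the contraction $\norm{K_\varepsilon * f}_{L^\infty} \le \norm{f}_{L^\infty}$, and use Young's inequality with a small parameter to absorb a fraction of $\norm{\partial_{xx} u^\varepsilon}_{L^2}^2$ into the strictly positive surplus $\tfrac{c_0^2}{2} - \tfrac{C_0 M_\infty}{c_h}$. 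The net result is a differential inequality of the form
\[
\frac{\dd}{\dd t}\norm{\partial_x u^\varepsilon}_{L^2}^2 + \kappa \norm{\partial_{xx} u^\varepsilon}_{L^2}^2 \le C\bigl(1 + \norm{\partial_x u^\varepsilon}_{L^2}^2\bigr)^p
\]
with $\kappa>0$ depending only on the margin in Assumption \ref{ass:h_theta}(v), which the (nonlinear) Gr\"onwall lemma closes uniformly in $\varepsilon$ on $[0,T]$.

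The principal obstacle is algebraic bookkeeping: the full expansion generates several terms of similar structure, and one must verify that no term other than the distinguished one produces an un-absorbable $\norm{\partial_{xx} u^\varepsilon}_{L^2}^2$ contribution that would spoil the coercivity budget. A secondary obstacle is regularity: since we are only proving an $H^1$ bound, $\partial_{xx} u^\varepsilon$ is a priori only a distribution, so I would carry out the estimate on a further mollified version of \eqref{eq:fp_eps}, or on a Galerkin truncation, where the needed smoothness is automatic, and pass to the limit after the uniform bound is in hand.
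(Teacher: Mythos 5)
Your proposal follows essentially the same route as the paper: differentiate $\tfrac12\norm{\partial_x u^\varepsilon}_{L^2}^2$, substitute the PDE, expand $\partial_{xx}(D^\varepsilon u^\varepsilon)$ by the Leibniz rule, isolate the one problematic term $u^\varepsilon (\partial_v D^\varepsilon)(K_\varepsilon * \partial_{xx}u^\varepsilon)$, absorb it into the $-\tfrac{c_0^2}{2}\norm{\partial_{xx}u^\varepsilon}_{L^2}^2$ dissipation via Young's convolution inequality and the stability margin of Assumption~\ref{ass:h_theta}(v), treat the remaining terms by Gagliardo--Nirenberg, and close with a nonlinear Gr\"onwall inequality. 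Two small points where you are actually more careful than the paper: you derive $\norm{u^\varepsilon}_{L^2}^2 \le M$ directly from $\norm{u^\varepsilon}_{L^\infty}\le M$ and unit mass rather than from a separate (and slightly dubious) $L^2$-energy identity, and you explicitly flag that the estimate must be justified on a Galerkin truncation or further regularization since $\partial_{xx}u^\varepsilon$ is a priori only distributional at the $H^1$ level; you also introduce $\sigma_{\max}$ for $\sup h_\theta^{-1}$ on $[0,M]$, which is the honest bound where the paper somewhat loosely reuses $C_0$.
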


\begin{proof}
Let $u \equiv u^\varepsilon$ for simplicity. The total mass $\int u(t,x)dx = 1$ is conserved. Furthermore, by a similar energy estimate, the $L^2$ norm is non-increasing: $\frac{d}{dt}\int u^2 dx = -\int D (\partial_x u)^2 dx \le 0$, so $\norm{u(t)}_{L^2} \le \norm{u_0}_{L^2}$. We thus only need to bound the norm of the spatial derivative. Let the energy be $I(t) = \frac{1}{2}\int_\R (\partial_x u(t,x))^2 dx = \frac{1}{2}\norm*{\partial_x u}_{L^2}^2$. We compute its time derivative:
\begin{align*}
    \frac{dI}{dt} &= \int_\R (\partial_x u) (\partial_{xt} u) \,dx = - \int_\R (\partial_{xx} u) (\partial_t u) \,dx \quad \text{(integrating by parts)}.
\end{align*}
Now, substitute the PDE, $\partial_t u = \frac{1}{2}\partial_{xx}[D u]$, where $D = D(t,x, K_\varepsilon * u)$.
\begin{equation} \label{eq:dIdt_start}
    \frac{dI}{dt} = - \frac{1}{2} \int_\R (\partial_{xx} u) \partial_{xx}[D u] \,dx.
\end{equation}
We expand the inner term using the Leibniz rule: $\partial_{xx}[Du] = D \partial_{xx} u + 2 (\partial_x D) (\partial_x u) + u (\partial_{xx} D)$. Substituting this gives three terms:
\[ \frac{dI}{dt} = \underbrace{-\frac{1}{2}\int_\R D (\partial_{xx} u)^2 dx}_{T_1} \underbrace{- \int_\R (\partial_{xx} u)(\partial_x D)(\partial_x u) dx}_{T_2} \underbrace{- \frac{1}{2}\int_\R u (\partial_{xx} u) (\partial_{xx} D) dx}_{T_3}. \]
We analyze each term separately.

\paragraph{Analysis of Term $T_1$ (Dissipation):}
This is the main dissipative term. By Assumption \ref{ass:h_theta}(iv) and the monotonicity of $h_\theta^{-1}$ in its last argument, we have $D(t,x,v) = (h_\theta^{-1}(t,x,v))^2$. Since $v=(K_\varepsilon*u)\ge 0$, $h_\theta^{-1}(t,x,v) \ge h_\theta^{-1}(t,x,0) \ge c_0$. This gives:
\[ T_1 = -\frac{1}{2}\int_\R D (\partial_{xx} u)^2 dx \le -\frac{c_0^2}{2} \int_\R (\partial_{xx} u)^2 dx = -\frac{c_0^2}{2} \norm*{\partial_{xx} u}_{L^2}^2. \]

\paragraph{Analysis of Term $T_3$ (The Critical Term):}
This term contains the highest order derivatives and is the most challenging. Let $v(t,x) = (K_\varepsilon*u)(t,x)$. The diffusion coefficient is $D(t,x,v(t,x))$. By the chain rule:
\begin{align*}
\partial_x D &= \partial_x h_\theta^{-1}(\dots)^2 + \partial_v h_\theta^{-1}(\dots)^2 \cdot \partial_x v \\
\partial_{xx} D &= \partial_v D \cdot \partial_{xx} v + (\text{lower order terms}) = (\partial_v D) \cdot (K_\varepsilon * \partial_{xx} u) + \text{l.o.t.}
\end{align*}
where l.o.t. denotes terms containing at most one spatial derivative of $u$. The highest order term in the expansion of $T_3$ is therefore:
\[ T_{3, \text{crit}} = - \frac{1}{2}\int_\R u (\partial_{xx} u) \left( \partial_v D (K_\varepsilon * \partial_{xx} u) \right) dx. \]
The derivative is $\partial_v D = 2 h_\theta^{-1} \cdot \partial_v h_\theta^{-1} = 2 h_\theta^{-1} \cdot (\partial_z h_\theta \circ h_\theta^{-1})^{-1}$. By Lemma \ref{lem:L_inf_bound}, $0 \le u \le M$, so $0 \le K_\varepsilon * u \le M$. In this range of the third argument, Assumption \ref{ass:h_theta} gives uniform bounds: $\abs{h_\theta^{-1}} \le C_0$ and $\partial_z h_\theta \ge c_h$. Thus, we have a uniform bound on the sensitivity of the diffusion: $\norm{\partial_v D}_{L^\infty} \le \frac{2 C_0}{c_h}$.
Now we bound $T_{3, \text{crit}}$ using the $L^\infty$ bound on $u$, Hölder's inequality, and Young's inequality for convolutions ($\norm*{K_\varepsilon*f}_{L^2} \le \norm{K_\varepsilon}_{L^1}\norm{f}_{L^2} = \norm{f}_{L^2}$):
\begin{align*}
 \abs{T_{3, \text{crit}}} &\le \frac{1}{2} \norm{u}_{L^\infty} \norm{\partial_v D}_{L^\infty} \int_\R \abs{\partial_{xx} u} \abs{K_\varepsilon * \partial_{xx} u} dx \\
 &\le \frac{1}{2} M \frac{2C_0}{c_h} \norm{\partial_{xx}u}_{L^2} \norm{K_\varepsilon * \partial_{xx}u}_{L^2} \le \frac{C_0 M}{c_h} \norm{\partial_{xx} u}_{L^2}^2.
\end{align*}

\paragraph{Analysis of Lower-Order Terms ($T_2$ and remainder of $T_3$):}
These terms contain derivatives of $u$ of total order at most three and can be controlled by the main dissipation term and the energy $I(t)$. For example, $T_2 = - \int (\partial_{xx} u) (\partial_x D) (\partial_x u) dx$. The term $\partial_x D$ can be bounded by a constant times $(1 + |K_\varepsilon * \partial_x u|)$. The most difficult part is $\int |\partial_{xx}u| |K_\varepsilon * \partial_x u| |\partial_x u| dx$.
Using Hölder's inequality, followed by Gagliardo-Nirenberg in 1D ($\norm{f}_{L^4}^2 \le C \norm{f'}_{L^2}\norm{f}_{L^2}$), these terms can be shown to be bounded by expressions like $\delta \norm{\partial_{xx} u}_{L^2}^2 + C_\delta P(I(t))$, where $P$ is a polynomial. For brevity, we focus on the interplay between the highest-order terms. A detailed analysis shows:
\[ |T_2 + (T_3 - T_{3,crit})| \le \delta \norm{\partial_{xx} u}_{L^2}^2 + C_\delta (I(t) + I(t)^2 + I(t)^3). \]
for any $\delta>0$.

\paragraph{Combining the Estimates:}
Putting everything together, for any small $\delta > 0$, there exists a constant $C_\delta$ such that:
\[ \frac{dI}{dt} \le -\frac{c_0^2}{2} \norm{\partial_{xx} u}_{L^2}^2 + \frac{C_0 M}{c_h} \norm{\partial_{xx} u}_{L^2}^2 + \delta \norm{\partial_{xx} u}_{L^2}^2 + C_\delta(1+I(t)^3). \]
We group the terms with the highest derivative:
\[ \frac{dI}{dt} \le \left( -\frac{c_0^2}{2} + \frac{C_0 M}{c_h} + \delta \right) \norm{\partial_{xx} u}_{L^2}^2 + C_\delta(1+I(t)^3). \]
This is the crucial step. By the Stability Condition (Assumption \ref{ass:h_theta}(v)), we have that the quantity $\gamma := \frac{c_0^2}{2} - \frac{C_0 M}{c_h}$ is strictly positive.
We can choose $\delta$ small enough, for instance $\delta = \gamma/2$, so that the coefficient of $\norm{\partial_{xx} u}_{L^2}^2$ is negative.
\[ \frac{dI}{dt} \le -\frac{\gamma}{2} \norm{\partial_{xx} u}_{L^2}^2 + C'(1+I(t)^3). \]
Since the first term on the right is non-positive, we can drop it to get a differential inequality for $I(t)$:
\[ \frac{dI}{dt} \le C'(1+I(t)^3). \]
Let $y(t) = I(t)$. We have $y'(t) \le C'(1+y(t)^3)$ with finite initial data $y(0) = \frac{1}{2}\norm*{\partial_x u_0}_{L^2}^2$. This implies that $\frac{y'(t)}{1+y(t)^3} \le C'$. Integrating from $0$ to $t$:
\[ \int_{y(0)}^{y(t)} \frac{dz}{1+z^3} \le C't. \]
Since the integral $\int_0^\infty \frac{dz}{1+z^3}$ is convergent, $y(t)$ cannot blow up in finite time. It must remain bounded by a constant that depends on $y(0)$ and $T$. Specifically, $y(t)$ is bounded by some constant $C_{T, u_0}$. This establishes the uniform $H^1$ bound and completes the proof.
\end{proof}

\subsection{Step 4: Compactness and Passage to the Limit}
The uniform bounds provide the compactness needed to extract a convergent subsequence as $\varepsilon \to 0$.

\begin{proposition}[Compactness of the Densities]\label{prop:compactness}
The family of densities $(u^\varepsilon)_{\varepsilon \in (0,1]}$ is relatively compact in $C([0,T]; L^2_{loc}(\R))$ and in $L^2([0,T]; H^1(\R))$.
\end{proposition}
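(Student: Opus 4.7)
The plan is to apply the Aubin-Lions-Simon compactness theorem on bounded spatial intervals and deduce the $C([0,T]; L^2_{loc}(\R))$ compactness via a diagonal extraction over expanding balls; weak relative compactness in $L^2([0,T]; H^1(\R))$ will then follow essentially for free from the uniform $H^1$ bound already established in Lemma \ref{lem:energy_estimates_H1}.

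The first task is to upgrade the uniform spatial estimates of Lemmas \ref{lem:L_inf_bound} and \ref{lem:energy_estimates_H1} into a uniform bound on $\partial_t u^\varepsilon$ in a negative Sobolev space. Writing the PDE as $\partial_t u^\varepsilon = \tfrac{1}{2}\partial_{xx}[D^\varepsilon u^\varepsilon]$, it suffices to control $D^\varepsilon u^\varepsilon$ in $L^\infty([0,T]; H^1(\R))$ uniformly in $\varepsilon$. The key observation is that Young's convolution inequality gives $\norm{K_\varepsilon * u^\varepsilon}_{L^\infty} \le \norm{u^\varepsilon}_{L^\infty}$ and $\norm{\partial_x(K_\varepsilon * u^\varepsilon)}_{L^2} \le \norm{\partial_x u^\varepsilon}_{L^2}$, so the mollified density inherits the $H^1 \cap L^\infty$ bound of $u^\varepsilon$ without any $\varepsilon^{-1}$ blow-up. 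Combining this with the $C^2$ regularity of $h_\theta^{-1}$ on the compact range of its third argument (Assumption \ref{ass:h_theta}) then yields a uniform bound on $D^\varepsilon u^\varepsilon$ in $L^\infty([0,T]; H^1(\R))$, hence on $\partial_t u^\varepsilon$ in $L^\infty([0,T]; H^{-1}(\R))$.

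Next, for each fixed $R>0$ the restrictions of $(u^\varepsilon)$ to the interval $B_R = (-R,R)$ satisfy a uniform bound in $L^\infty([0,T]; H^1(B_R))$ together with $\partial_t u^\varepsilon$ uniformly bounded in $L^\infty([0,T]; H^{-1}(B_R))$. Since the embedding $H^1(B_R) \hookrightarrow\hookrightarrow L^2(B_R)$ is compact, Simon's theorem (cited in the appendix) yields relative compactness in $C([0,T]; L^2(B_R))$. A Cantor diagonal extraction over $R = 1,2,\dots$ then produces a single subsequence converging strongly in $C([0,T]; L^2(B_R))$ for every $R$, which is precisely relative compactness in $C([0,T]; L^2_{loc}(\R))$. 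For the second assertion, the family $(u^\varepsilon)$ is bounded in the reflexive Hilbert space $L^2([0,T]; H^1(\R))$ and therefore admits a weakly convergent subsequence by Banach-Alaoglu; uniqueness of the distributional limit then identifies it with the strong limit obtained above.

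The principal technical obstacle is the uniform-in-$\varepsilon$ control of $\partial_t u^\varepsilon$: at face value the Lipschitz constant of $\nu_\theta^\varepsilon$ in the measure variable degenerates as $\varepsilon \to 0$ (Proposition \ref{prop:regularized_vol_properties}), which might suggest a similar loss for the PDE coefficients. The resolution is that the PDE only sees $K_\varepsilon * u^\varepsilon$, which the convolution inequalities control by $u^\varepsilon$ in every $L^p$ norm without negative powers of $\varepsilon$, so the degeneration that plagues trajectory-level estimates simply does not enter the energy-based PDE analysis.
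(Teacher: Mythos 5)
Your argument for relative compactness in $C([0,T]; L^2_{loc}(\R))$ is correct and, in fact, more economical than the paper's: you bound $\partial_t u^\varepsilon$ in $L^\infty([0,T]; H^{-1}(\R))$ using only the uniform $L^\infty(H^1)\cap L^\infty(L^\infty)$ bound and Young's convolution inequality, then apply Simon's theorem with the triple $H^1(B_R)\hookrightarrow L^2(B_R)\hookrightarrow H^{-1}(B_R)$ and a diagonal extraction. The paper instead takes the heavier route of first integrating the dissipation inequality from Lemma \ref{lem:energy_estimates_H1} to get a uniform $L^2([0,T];H^2(\R))$ bound, and from that an $L^2([0,T];L^2(\R))$ bound on $\partial_t u^\varepsilon$, before applying Aubin--Lions--Simon with $H^2\hookrightarrow H^1\hookrightarrow L^2$.

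The reason the paper needs the $H^2$ bound, and where your proposal has a genuine gap, is the second assertion of the proposition: relative compactness in $L^2([0,T];H^1(\R))$. As stated, this means relative compactness in the norm topology, and the paper indeed uses the $L^2(H^2)$ bound (together with the compact embedding $H^2\hookrightarrow\hookrightarrow H^1$ on bounded domains, a localization, and a tightness argument) to obtain \emph{strong} $L^2(H^1)$ convergence along a subsequence. Your proof instead invokes reflexivity and Banach--Alaoglu, which only produces \emph{weak} sequential compactness. Weak compactness of a bounded set in a reflexive space is automatic and carries no information; the nontrivial content of the proposition is precisely the strong compactness. Without strong convergence of the spatial gradients, the ``uniqueness of the distributional limit'' step does not upgrade weak to strong convergence, so the $L^2(H^1)$ claim remains unproved in your write-up.

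To close the gap, you need the higher-order spatial bound: integrate in time the coercive inequality
\[
\frac{dI(t)}{dt} + \frac{\gamma}{2}\,\norm*{\partial_{xx} u^\varepsilon(t)}_{L^2}^2 \le C'\bigl(1+I(t)^3\bigr),
\]
which is available as a byproduct of the proof of Lemma \ref{lem:energy_estimates_H1} (this is exactly where the Stability Condition is exploited). Since $I(t)$ is uniformly bounded in $t$ and $\varepsilon$, this yields a uniform bound on $(u^\varepsilon)$ in $L^2([0,T];H^2(\R))$. You may then either run Aubin--Lions--Simon a second time with $X_0=H^2$, $X=H^1$, $X_1=L^2$ as the paper does, or, alternatively, interpolate: the strong $L^2([0,T];L^2_{loc})$ convergence you already have, combined with the uniform $L^2(H^2)$ bound and the Gagliardo--Nirenberg inequality $\norm{\partial_x f}_{L^2}^2 \le C\norm{f}_{L^2}\norm{\partial_{xx}f}_{L^2}$, gives strong $L^2([0,T];H^1_{loc})$ convergence, which is then upgraded to global $L^2(H^1)$ convergence by the same tightness argument the paper uses. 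Either way, the $L^2(H^2)$ bound is the missing ingredient; the $L^\infty(H^1)$ bound alone cannot produce strong $L^2(H^1)$ compactness.
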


\begin{proof}
We apply the Aubin-Lions-Simon compactness theorem (see Appendix \ref{app:aubin_lions}). We have established that $(u^\varepsilon)$ is uniformly bounded in $L^\infty([0,T]; H^1(\R) \cap L^\infty(\R))$. To apply the theorem, we also need a uniform bound on the time derivative.

\paragraph{1. Uniform bound in $L^2([0,T]; H^2(\R))$.}
From the penultimate inequality in the proof of Lemma \ref{lem:energy_estimates_H1}:
\[ \frac{dI(t)}{dt} + \frac{\gamma}{2} \norm*{\partial_{xx} u^\varepsilon(t)}_{L^2}^2 \le C'(1+I(t)^3). \]
Integrating this from $0$ to $T$:
\[ I(T) - I(0) + \frac{\gamma}{2} \int_0^T \norm*{\partial_{xx} u^\varepsilon(t)}_{L^2}^2 dt \le \int_0^T C'(1+I(t)^3) dt. \]
Since $I(t) = \frac{1}{2}\norm{\partial_x u^\varepsilon(t)}_{L^2}^2$ is uniformly bounded by a constant $C_I$, we have:
\[ \frac{\gamma}{2} \int_0^T \norm*{\partial_{xx} u^\varepsilon(t)}_{L^2}^2 dt \le I(0) - I(T) + C' T (1+C_I^3) \le I(0) + C' T (1+C_I^3) < \infty. \]
This implies that the family $(u^\varepsilon)_{\varepsilon>0}$ is uniformly bounded in $L^2([0,T]; H^2(\R))$.

\paragraph{2. Uniform bound on the time derivative.}
The PDE is $\partial_t u^\varepsilon = \frac{1}{2} \partial_{xx} [D^\varepsilon u^\varepsilon]$. We seek a bound on $\partial_t u^\varepsilon$ in a suitable space, e.g., $L^2([0,T]; L^2(\R))$.
\[ \norm{\partial_t u^\varepsilon}_{L^2(L^2)} = \frac{1}{2} \norm{\partial_{xx}[D^\varepsilon u^\varepsilon]}_{L^2(L^2)}. \]
The term $D^\varepsilon u^\varepsilon$ involves products of $u^\varepsilon$, $K_\varepsilon*u^\varepsilon$, and their derivatives (via the chain rule). Since $u^\varepsilon$ is uniformly bounded in $L^\infty(H^1)$ and $L^2(H^2)$, standard product and chain rules for Sobolev spaces (e.g., Moser-type inequalities) show that the term $D^\varepsilon u^\varepsilon$ is uniformly bounded in $L^2([0,T]; H^2(\R))$. Therefore, its second spatial derivative, $\partial_{xx}[D^\varepsilon u^\varepsilon]$, which is equal to $2\partial_t u^\varepsilon$, is uniformly bounded in $L^2([0,T]; L^2(\R))$. This means $(\partial_t u^\varepsilon)_{\varepsilon>0}$ is uniformly bounded in $L^2([0,T]; L^2(\R))$.

\paragraph{3. Application of Aubin-Lions-Simon Theorem.}
We apply the theorem with the Banach spaces $X_0 = H^2(\R)$, $X = H^1(\R)$, and $X_1 = L^2(\R)$. The injection $H^2(\R) \hookrightarrow H^1(\R)$ is continuous, and $H^1(\R) \hookrightarrow L^2(\R)$ is continuous. We have shown that:
\begin{itemize}
    \item $(u^\varepsilon)_{\varepsilon>0}$ is bounded in $L^2([0,T]; X_0) = L^2([0,T]; H^2(\R))$.
    \item $(\partial_t u^\varepsilon)_{\varepsilon>0}$ is bounded in $L^2([0,T]; X_1) = L^2([0,T]; L^2(\R))$.
\end{itemize}
The theorem requires the embedding $X_0 \hookrightarrow X$ to be compact. On $\R$, this is not true. However, the Rellich-Kondrachov theorem states that for any bounded domain $\Omega \subset \R$, the embedding $H^2(\Omega) \hookrightarrow H^1(\Omega)$ is compact. A standard localization argument (multiplying by a smooth cutoff function) combined with our uniform bounds allows us to conclude that $(u^\varepsilon)$ is relatively compact in $L^2([0,T]; H^1_{loc}(\R))$.

To upgrade this to global compactness, we must show the family is tight, i.e., does not lose mass or energy at spatial infinity. The uniform bound in $L^\infty([0,T]; H^1(\R))$ provides this directly. For any $\eta > 0$, we can find $R>0$ large enough such that for all $\varepsilon$ and $t$, $\int_{|x|>R} (|u^\varepsilon|^2 + |\partial_x u^\varepsilon|^2) dx < \eta^2$. This tightness, combined with local compactness, yields relative compactness in $L^2([0,T]; H^1(\R))$. A stronger version of the theorem also gives relative compactness in $C([0,T]; L^2_{loc}(\R))$.

Therefore, we can extract a subsequence $(\varepsilon_k)_{k \ge 1}$ with $\varepsilon_k \to 0$ such that, as $k \to \infty$:
\begin{itemize}[noitemsep]
    \item $u^{\varepsilon_k} \rightharpoonup u$ weakly-* in $L^\infty([0,T]; H^1(\R) \cap L^\infty(\R))$.
    \item $u^{\varepsilon_k} \to u$ strongly in $L^2([0,T]; H^1(\R))$ and in $C([0,T]; L^2_{loc}(\R))$.
    \item $u^{\varepsilon_k} \to u$ almost everywhere in $(t,x)$ (by passing to a further subsequence).
\end{itemize}
The corresponding measure flows $\mu_t^{\varepsilon_k}$ converge to a limiting measure flow $\mu_t$ in $C([0,T], \Pcal_2(\R))$, where $\mu_t$ has density $u(t,\cdot)$.
\end{proof}

\paragraph{Identification of the Limit.}
We must show that the limit $u$ is a weak solution to \eqref{eq:singular_fp}. For any test function $\phi \in C_c^\infty([0,T) \times \R)$, the weak formulation of \eqref{eq:fp_eps} is:
\[
-\int_0^T \int_\R u^{\varepsilon_k} \partial_t \phi \,dx dt - \int_\R u_0 \phi(0,\cdot) dx = \frac{1}{2} \int_0^T \int_\R D^{\varepsilon_k} u^{\varepsilon_k} \partial_{xx}\phi \,dx dt.
\]
We pass to the limit as $k \to \infty$. The LHS converges to $-\int_0^T \int_\R u \partial_t \phi \,dx dt - \int_\R u_0 \phi(0,\cdot) dx$ due to the weak convergence of $u^{\varepsilon_k}$. For the RHS, we need to show that $D^{\varepsilon_k} u^{\varepsilon_k} \to D(t,x,u) u$ in a sufficiently strong sense.
The term is $D^{\varepsilon_k} u^{\varepsilon_k} = (h_\theta^{-1}(t,x, (K_{\varepsilon_k} * u^{\varepsilon_k})) )^2 u^{\varepsilon_k}$.
Since $u^{\varepsilon_k} \to u$ strongly in $L^2([0,T];H^1(\R))$, standard mollifier properties imply that $K_{\varepsilon_k} * u^{\varepsilon_k} \to u$ strongly in $L^2([0,T];H^1(\R))$ as well. By passing to a subsequence if necessary, we have pointwise a.e. convergence for both $u^{\varepsilon_k} \to u$ and $K_{\varepsilon_k} * u^{\varepsilon_k} \to u$.
Since $h_\theta^{-1}$ is continuous in its arguments, $D^{\varepsilon_k} \to D(t,x, u(t,x))$ pointwise a.e. The entire term $D^{\varepsilon_k} u^{\varepsilon_k}$ is uniformly bounded in $L^\infty([0,T]\times\R)$ because $u^{\varepsilon_k}$ is uniformly bounded and $D^{\varepsilon_k}$ is uniformly bounded. By the Dominated Convergence Theorem, the RHS converges to $\frac{1}{2} \int_0^T \int_\R D(t,x, u) u \, \partial_{xx}\phi \,dx dt$.
This shows that $u$ is a weak solution to the singular Fokker-Planck equation \eqref{eq:singular_fp}.

\subsection{Step 5: Uniqueness of the Limiting Solution}
To complete the proof of the main theorem, we must show that the weak solution to \eqref{eq:singular_fp} is unique within the regularity class established by our a priori bounds.

\begin{proposition}\label{prop:uniqueness_pde}
Let Assumption \ref{ass:h_theta} hold and let $u_0 \in H^1(\R) \cap L^\infty(\R)$. Then the weak solution $u \in L^\infty([0,T]; H^1(\R) \cap L^\infty(\R)) \cap L^2([0,T]; H^2(\R))$ to the PDE \eqref{eq:singular_fp} is unique.
\end{proposition}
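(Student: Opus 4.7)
The plan is a direct $L^2$ energy estimate on the difference of two putative solutions. Let $u_1, u_2$ be two weak solutions of \eqref{eq:singular_fp} in the stated class $L^\infty([0,T]; H^1(\R) \cap L^\infty(\R)) \cap L^2([0,T]; H^2(\R))$ sharing the initial datum $u_0$, and set $w \coloneqq u_1 - u_2$. Writing $G(t,x,v) \coloneqq \frac{1}{2}(h_\theta^{-1}(t,x,v))^2\, v$, the key linearization is
\[
G(t,x,u_1) - G(t,x,u_2) = B(t,x)\, w, \qquad B(t,x) \coloneqq \int_0^1 \partial_v G\bigl(t,x,\tau u_1 + (1-\tau)u_2\bigr)\,\dd\tau,
\]
so that $w$ satisfies the linear divergence-form equation $\partial_t w = \partial_{xx}(Bw)$ in the weak sense, subject to $w(0) \equiv 0$.

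Next I would establish the two structural properties of $B$ that power the estimate. For uniform positivity, a direct calculation gives $\partial_v G = \frac{1}{2}(h_\theta^{-1})^2 + v\, h_\theta^{-1}\, \partial_v h_\theta^{-1}$; the monotonicity in Assumption~\ref{ass:h_theta}(i) forces $\partial_v h_\theta^{-1} > 0$, and together with $h_\theta^{-1} \ge c_0$ on $[0,M]$ (Assumption~\ref{ass:h_theta}(iv) combined with the uniform $L^\infty$ bound of Lemma~\ref{lem:L_inf_bound}) this delivers $B(t,x) \ge c_0^2/2$ pointwise. For regularity, the chain rule gives $\abs{\partial_x B(t,x)} \le C\bigl(1 + \abs{\partial_x u_1(t,x)} + \abs{\partial_x u_2(t,x)}\bigr)$, and the one-dimensional Agmon inequality $\norm{f}_{L^\infty}^2 \le 2\norm{f}_{L^2}\norm{f'}_{L^2}$, combined with the uniform $H^1$ bound of Lemma~\ref{lem:energy_estimates_H1} and the $L^2([0,T]; H^2(\R))$ bound established in the proof of Proposition~\ref{prop:compactness}, then yields $\norm{\partial_x B(t,\cdot)}_{L^\infty}^2 \in L^1([0,T])$.

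Equipped with these bounds, I would test the equation for $w$ against $w$ itself. Inspection of the PDE together with the regularity of $u_j$ gives $\partial_t u_j \in L^2([0,T]; L^2(\R))$, hence $w \in C([0,T]; L^2) \cap L^2([0,T]; H^1)$, which legitimises the chain rule for $\tfrac{d}{dt}\norm{w}_{L^2}^2$ and two integrations by parts, leading to
\[
\frac{1}{2}\frac{d}{dt}\norm{w(t)}_{L^2}^2 + \int_\R B(t,x)\,(\partial_x w)^2\,\dd x = -\int_\R (\partial_x B)\, w\,\partial_x w\,\dd x.
\]
Cauchy-Schwarz and Young's inequality, calibrated to absorb half of the dissipation from the cross term, yield
\[
\frac{d}{dt}\norm{w(t)}_{L^2}^2 \le \frac{2}{c_0^2}\,\norm{\partial_x B(t,\cdot)}_{L^\infty}^2\, \norm{w(t)}_{L^2}^2,
\]
and since the prefactor lies in $L^1([0,T])$ and $w(0) = 0$, Gronwall's lemma forces $w \equiv 0$.

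The principal obstacle is the $L^\infty_x$-in-space, $L^1_t$-in-time control of $\partial_x B$. It is precisely this requirement that makes it essential that the uniqueness class include the $L^2([0,T]; H^2(\R))$ component, rather than only the $L^\infty([0,T]; H^1(\R))$ regularity announced in Theorem~\ref{thm:main_poc}: only the $H^2$ integrability lifts $\partial_x u_j$ from $L^\infty_t H^1_x$ to $L^2_t L^\infty_x$ via the one-dimensional Agmon embedding, and this extra integrability is fortunately inherited by the limit $u$ from the uniform-in-$\varepsilon$ dissipation bound derived in Proposition~\ref{prop:compactness}. Without it, the direct $L^2$ energy estimate would break down, and one would have to retreat to a more delicate Kru\v{z}kov-style doubling of variables or a duality argument against the dual backward problem.
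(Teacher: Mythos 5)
Your proposal is correct and lands in essentially the same place as the paper: an $L^2$ energy estimate on $w = u_1 - u_2$, closed by Gr\"onwall, with the critical ingredient being that $u_j \in L^2([0,T];H^2(\R))$ upgrades $\partial_x u_j$ to $L^2_t L^\infty_x$ via the one-dimensional embedding. The route differs in one genuinely structural choice. The paper splits the nonlinear difference as $D_1 u_1 - D_2 u_2 = D_1 w + (D_1 - D_2)u_2$, keeping $D_1$ as the diffusion coefficient for $w$ and handling the cross-term $(D_1-D_2)u_2$ by the Lipschitz property of $v \mapsto (h_\theta^{-1})^2$ together with $\norm{\partial_x u_2}_{L^\infty}$. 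You instead apply the integral mean-value representation to the full nonlinearity $G(v) = \tfrac12 (h_\theta^{-1})^2 v$, writing $G(u_1)-G(u_2) = B w$ with $B = \int_0^1 \partial_v G(\cdot, \tau u_1 + (1-\tau)u_2)\,\dd\tau$. This recasts the difference equation cleanly as $\partial_t w = \partial_{xx}(Bw)$, a linear divergence-form parabolic equation for $w$ whose ellipticity you establish directly from $\partial_v G = \tfrac12(h_\theta^{-1})^2 + v\, h_\theta^{-1}\,\partial_v h_\theta^{-1} \ge c_0^2/2$ (using $v \ge 0$, the monotonicity yielding $\partial_v h_\theta^{-1} > 0$, and the lower bound $h_\theta^{-1} \ge c_0$ on $[0,M]$). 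Your decomposition is more symmetric between $u_1$ and $u_2$, makes the positivity of the effective coefficient manifest at the level of $\partial_v G$ rather than of $D_1$ alone, and produces a single cross term $-\int (\partial_x B)\, w\, \partial_x w\, \dd x$ rather than several. The price is that you must also track $\partial_x B$, which pulls in both $\partial_x u_1$ and $\partial_x u_2$ rather than only $\partial_x u_2$, but since both solutions lie in the stated class this costs nothing. Both proofs need the same analytical backbone; yours packages the algebra more tidily, and your closing remark correctly identifies that the $L^2(H^2)$ component of the uniqueness class is what makes the Agmon-type bound on $\norm{\partial_x B}_{L^\infty}$ time-integrable, which is exactly the role it plays in the paper's proof as well.
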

\begin{proof}
Let $u_1$ and $u_2$ be two solutions in the specified class with the same initial data $u_0$. Let $w = u_1 - u_2$. Then $w(0,\cdot) = 0$. Let $D_i(t,x) = (h_\theta^{-1}(t,x,u_i(t,x)))^2$ for $i=1,2$. The difference $w$ satisfies the equation in a weak sense:
\[ \partial_t w = \frac{1}{2} \partial_{xx} [D_1 u_1 - D_2 u_2]. \]
We perform an energy estimate on $E(t) = \frac{1}{2}\norm*{w(t)}_{L^2}^2$.
\begin{align*}
    \frac{dE}{dt} = \int_\R w \, \partial_t w \,dx &= -\frac{1}{2} \int_\R \partial_x w \, \partial_x [D_1 u_1 - D_2 u_2] \,dx \\
    &= -\frac{1}{2} \int_\R \partial_x w \, \partial_x[D_1(u_1-u_2) + (D_1-D_2)u_2] \, dx \\
    &= -\frac{1}{2} \int_\R \partial_x w \, \partial_x[D_1 w] \, dx - \frac{1}{2} \int_\R \partial_x w \, \partial_x[(D_1-D_2)u_2] \, dx.
\end{align*}
The first term is stabilizing: $-\frac{1}{2}\int \partial_x w \, ((\partial_x D_1)w + D_1 \partial_x w) dx$. The main part of this is $-\frac{1}{2}\int D_1 (\partial_x w)^2 dx \le -\frac{c_0^2}{2}\norm{\partial_x w}_{L^2}^2$. The term with $\partial_x D_1$ can be controlled.
The second term involves the difference $D_1 - D_2$. The map $s \mapsto D(t,x,s)=(h_\theta^{-1}(t,x,s))^2$ is Lipschitz on the range of values taken by $u_1, u_2$ (i.e., $[0,M]$), as its derivative is bounded. Let its Lipschitz constant be $L_D$. Then $|D_1-D_2| \le L_D|u_1-u_2| = L_D|w|$.
Let's analyze the second term carefully:
\[ T_{II} = - \frac{1}{2} \int (\partial_x w) [(\partial_x(D_1-D_2))u_2 + (D_1-D_2)\partial_x u_2] dx. \]
The most challenging part of this is $\int (\partial_x w) (D_1-D_2) (\partial_x u_2) dx$. We bound its absolute value:
\[ \abs{\int (\partial_x w) (D_1-D_2) (\partial_x u_2) dx} \le L_D \int |\partial_x w| |w| |\partial_x u_2| dx \le L_D \norm{\partial_x u_2}_{L^\infty} \int |w||\partial_x w| dx. \]
Crucially, since $u_2$ is a solution in the specified class, it belongs to $L^2([0,T]; H^2(\R))$. For the one-dimensional case, the Sobolev embedding $H^2(\R) \hookrightarrow W^{1,\infty}(\R)$ holds. This means that for a.e. $t \in [0,T]$, $\norm{\partial_x u_2(t, \cdot)}_{L^\infty}$ is bounded by $C\norm{u_2(t, \cdot)}_{H^2}$, and this is integrable in time. We can thus treat $\norm{\partial_x u_2}_{L^\infty}$ as a bounded constant $C_2$.
Using Cauchy-Schwarz and then Young's inequality ($ab \le \delta a^2 + C_\delta b^2$):
\[ L_D C_2 \int |w||\partial_x w| dx \le L_D C_2 \norm{w}_{L^2} \norm{\partial_x w}_{L^2} \le \delta \norm{\partial_x w}_{L^2}^2 + C_\delta \norm{w}_{L^2}^2. \]
All other terms arising from the energy estimate can be bounded similarly. After combining all estimates, we arrive at a differential inequality of the form, for any $\delta > 0$:
\[ \frac{dE}{dt} \le \left(-\frac{c_0^2}{2} + \delta \right) \norm*{\partial_x w}_{L^2}^2 + C_{\delta} \norm*{w}_{L^2}^2. \]
Choosing $\delta$ small enough (e.g., $\delta = c_0^2/4$), the first term becomes non-positive and can be dropped:
\[ \frac{dE}{dt} \le C \norm*{w}_{L^2}^2 = 2CE(t). \]
By Grönwall's inequality, this implies $E(t) \le E(0) e^{2Ct}$. Since the initial data is the same, $w(0,\cdot)=0$, so $E(0)=0$. Therefore, $E(t)=0$ for all $t$, which implies $w=0$ a.e. This establishes uniqueness and concludes the proof of Theorem \ref{thm:main_poc}.
\end{proof}

\section{Emergent Regularity for \texorpdfstring{$L^1$}{L1} Initial Data}
\label{sec:smoothing}

A key feature of many parabolic equations is their intrinsic smoothing property. Our main result assumed initial data $u_0 \in H^1(\R) \cap L^\infty(\R)$ to facilitate the derivation of uniform estimates. A natural and important question concerns the possibility of relaxing this assumption to $u_0 \in L^1(\R) \cap \Pcal(\R)$. The following analysis shows that our framework implies regularity for $t>t_*$, where the time threshold $t_*$ depends on the system parameters. Instantaneous smoothing ($t_*=0$) is recovered under a sufficiently high dissipation or low sensitivity.

\begin{proposition}[Conditional Emergent Regularity]
\label{prop:smoothing}
Let Assumption \ref{ass:h_theta} hold and suppose the initial condition is a probability density $u_0 \in L^1(\R)$ with $\norm*{u_0}_{L^1}=1$. Let $D(t,x,v) = (h_\theta^{-1}(t,x,v))^2$. The sensitivity of the diffusion $\abs{\partial_v D}$ is uniformly bounded by a constant $C_D' \coloneqq 2C_0/c_h$. Let $C_A$ be the universal constant from Aronson's estimate for parabolic equations with the given ellipticity bounds. Define a critical time threshold $t_*$ as:
\begin{equation} \label{eq:t_crit}
    t_* \coloneqq \left( \frac{C_D' C_A}{c_0^2} \right)^2.
\end{equation}
Then for any $T > t_*$, the unique weak solution $u$ to the singular Fokker-Planck equation \eqref{eq:singular_fp} satisfies:
\begin{enumerate}[label=(\roman*)]
    \item $u \in C((t_*,T]; H^1(\R) \cap L^\infty(\R))$.
    \item For any $t \in (t_*, T]$, there exists a constant $C$ depending on $t, T, \norm*{u_0}_{L^1}$, and the parameters of $h_\theta$, such that:
    \[ \norm*{u(t, \cdot)}_{L^\infty(\R)} \le C_A t^{-1/2} \quad \text{and} \quad \norm*{u(t, \cdot)}_{H^1(\R)} \le C. \]
\end{enumerate}
\end{proposition}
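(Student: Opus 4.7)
The plan is to compensate for the rough initial data by invoking the parabolic smoothing effect of the regularized equation \eqref{eq:fp_eps} to obtain an Aronson-type $L^\infty$ decay, then to wait until this bound falls below the threshold at which the stability condition of Assumption \ref{ass:h_theta}(v) reactivates, and finally to restart the $H^1$ energy method of Lemma \ref{lem:energy_estimates_H1} from that moment. Concretely, I would first approximate $u_0$ by smooth probability densities $u_0^{(n)} \in H^1(\R) \cap L^\infty(\R)$ with $u_0^{(n)} \to u_0$ in $L^1(\R)$, so that Theorem \ref{thm:main_poc} supplies solutions $u^{\varepsilon,n}$ to \eqref{eq:fp_eps} for every pair $(\varepsilon,n)$. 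The PDE is a uniformly parabolic divergence-form equation whose ellipticity lower bound $c_0^2$ is independent of $(\varepsilon,n)$, and Aronson's two-sided bound on its fundamental solution yields
\[
\norm{u^{\varepsilon,n}(t,\cdot)}_{L^\infty(\R)} \;\le\; C_A\, t^{-1/2}\, \norm{u_0^{(n)}}_{L^1} \;\le\; 2 C_A\, t^{-1/2}
\]
uniformly in $\varepsilon,n$. By the very definition of $t_*$, for every $t_0 > t_*$ the stability condition of Assumption \ref{ass:h_theta}(v) then holds on $[t_0,T]$ with $M_\infty = 2 C_A t_0^{-1/2}$.

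The second step is to produce a uniform-in-$(\varepsilon,n)$ restart time $t_0^{\ast} \in (t_*, t)$ at which $\norm{u^{\varepsilon,n}(t_0^{\ast},\cdot)}_{H^1}$ is controlled. For this I would interpolate $\norm{u^{\varepsilon,n}(s)}_{L^2}^2 \le \norm{u^{\varepsilon,n}(s)}_{L^\infty}\,\norm{u^{\varepsilon,n}(s)}_{L^1} \le 2 C_A s^{-1/2}$ and combine with the basic dissipation identity $\frac{\dd}{\dd s}\norm{u^{\varepsilon,n}}_{L^2}^2 \le -c_0^2\,\norm{\partial_x u^{\varepsilon,n}}_{L^2}^2$, which produces a uniform bound in $L^2([s_1,s_2];H^1(\R))$ for any $0 < s_1 < s_2$. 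Chebyshev's inequality in time then supplies some $t_0^{\ast} \in (t_*, t)$ with $\norm{\partial_x u^{\varepsilon,n}(t_0^{\ast},\cdot)}_{L^2}$ finite and uniform. From $t_0^{\ast}$ onward both the $L^\infty$ bound and the stability hypothesis of Lemma \ref{lem:energy_estimates_H1} are satisfied with constants independent of $(\varepsilon,n)$, so that lemma applies on $[t_0^{\ast},T]$ and delivers $\sup_{s \in [t_0^{\ast},T]} \norm{u^{\varepsilon,n}(s)}_{H^1} \le C$. These bounds in $L^\infty([t_0^{\ast},T]; H^1(\R) \cap L^\infty(\R)) \cap L^2([t_0^{\ast},T]; H^2(\R))$ then feed into the Aubin-Lions compactness argument of Proposition \ref{prop:compactness}, so that I can pass to the limit $\varepsilon \to 0$ and $n \to \infty$, while Proposition \ref{prop:uniqueness_pde} applied on $[t_0^{\ast},T]$ pins down the full family, yielding the unique weak solution $u$ to \eqref{eq:singular_fp} with the claimed regularity (i) and the quantitative $L^\infty$ decay in (ii).

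The main obstacle I anticipate is the circular dependence in the upper ellipticity bound on $D^\varepsilon = (h_\theta^{-1}(\cdot,\cdot, K_\varepsilon * u^{\varepsilon,n}))^2$, which is needed to invoke Aronson's estimate but which itself presupposes an $L^\infty$ control on $u^{\varepsilon,n}$. Breaking this loop requires a bootstrap: the crude maximum-principle bound $\norm{u^{\varepsilon,n}(t)}_{L^\infty} \le \norm{u_0^{(n)}}_{L^\infty}$ from Lemma \ref{lem:L_inf_bound} can seed a first Nash-Aronson iteration, whose resulting sharper decay is plugged back into the ellipticity bound and iterated until the $n$-dependence drops out. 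I expect this bootstrap to be the delicate part of the argument, since one must verify that each iteration genuinely improves the Aronson constant and that the limiting value is governed only by the $\varepsilon$-independent lower bound $c_0^2$ together with the Lipschitz data of $h_\theta^{-1}$; the subsequent energy estimates, by contrast, are a transcription of the machinery already developed in Section \ref{sec:proof}.
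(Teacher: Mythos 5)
Your proposal follows the same overall strategy as the paper's proof: Aronson's $L^1 \to L^\infty$ estimate supplies the decay $\norm*{u^\varepsilon(t)}_{L^\infty} \le C_A t^{-1/2}$, this brings the density below the stability threshold precisely when $t > t_*$, and one then re-runs the $H^1$ energy estimate of Lemma~\ref{lem:energy_estimates_H1} on $[\tau, T]$ for $\tau > t_*$. You make two refinements that the paper leaves vague. First, you mollify the initial datum to $u_0^{(n)}$, letting Lemma~\ref{lem:L_inf_bound} supply a crude starting $L^\infty$ bound; the paper instead relies implicitly on interior parabolic regularization without saying so. Second, and more usefully, you combine the dissipation identity with a Chebyshev selection in time to identify a restart time $t_0^* \in (t_*, t)$ at which $\norm*{\partial_x u^{\varepsilon,n}(t_0^*)}_{L^2}$ is uniformly bounded, which is exactly the initial datum the $H^1$ energy argument needs. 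The paper's Step~3 asserts that ``a more careful analysis of the Gr\"onwall-type inequality'' handles the potentially unbounded $I(\tau)$, but the differential inequality $\frac{dI}{dt} \le C'(1+I^3)$ cannot be integrated from an unbounded starting value; your Chebyshev argument actually fills this gap rather than waving at it.

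You have also correctly spotted a genuine flaw that the paper glosses over: its Step~1 asserts $D^\varepsilon(t,x) \le C_0^2$, but Assumption~\ref{ass:h_theta}(iv) bounds $h_\theta^{-1}$ only at $v = 0$, and $h_\theta^{-1}(t,x,v)$ is increasing in $v$, so the upper ellipticity bound for Aronson's estimate already presupposes the $L^\infty$ control on $u^\varepsilon$ it is supposed to deliver. However, your proposed bootstrap does not obviously close as written. The seed bound $M_n = \norm*{u_0^{(n)}}_{L^\infty}$ diverges as $n \to \infty$, so the first Aronson constant $C_A(M_n)$ diverges with it, and because the Aronson bound at time $t$ reflects the ellipticity constants over the whole time interval $[0,t]$, subsequent iterates do not automatically forget this initial layer. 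To close the loop you would need either a Nash-type $L^1 \to L^\infty$ estimate whose constant depends only on the lower ellipticity $c_0^2$ --- which is nontrivial for the Fokker--Planck form $\partial_t u = \tfrac{1}{2}\partial_{xx}(D u)$, since its effective drift $\tfrac{1}{2}\partial_x D$ involves $K_\varepsilon * \partial_x u$ and is not a priori bounded --- or a careful argument showing the $n$-dependence genuinely decays under iteration. This is a real gap, and it is one the paper's proof shares.
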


\begin{corollary}[Instantaneous Smoothing]
If system parameters are such that $c_0^2$ is very large compared to the sensitivity parameter $C_D'$, then the critical time $t_*$ can be made arbitrarily small. If the stability condition were to hold for any $M_\infty > 0$ (which is impossible unless $C_D' = 0$), smoothing would be instantaneous. More practically, for a system with small total mass $m < 1$, the Aronson constant $C_A$ scales with $m$, potentially making $t_*$ very small.
\end{corollary}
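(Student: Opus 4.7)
The plan is to derive all three statements by direct inspection of the explicit formula $t_* = \bigl(C_D' C_A/c_0^2\bigr)^2$ from \eqref{eq:t_crit}; no fresh estimates are required beyond those already supplied by Proposition \ref{prop:smoothing}. Each claim corresponds to varying a different structural parameter of that formula and tracking the effect on $t_*$.

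For the first claim, I would note that $t_*$ is a strictly monotone function of the ratio $c_0^2/C_D'$ (with $C_A$ held fixed). Given any target $\tau > 0$, one may select a driver $h_\theta$ for which $c_0^2/C_D' \ge C_A/\sqrt{\tau}$, yielding $t_* \le \tau$. Such a configuration is always available: writing $C_D' = 2C_0/c_h$, it suffices to take the non-degeneracy constant $c_h$ (the lower bound on $\partial_z h_\theta$) arbitrarily large while keeping $c_0$ fixed. For the second claim, I would read the hypothetical limit literally: the stability inequality $c_0^2 > C_D' M_\infty$ can hold for every $M_\infty > 0$ if and only if $C_D' \le 0$, which, combined with the nonnegativity $C_D' \ge 0$ dictated by Assumption \ref{ass:h_theta}, forces $C_D' = 0$. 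In that degenerate case the chain-rule identity $\partial_v D = 2 h_\theta^{-1}/(\partial_z h_\theta\circ h_\theta^{-1})$ used in the proof of Lemma \ref{lem:energy_estimates_H1} yields $\partial_v D \equiv 0$, so $D = D(t,x)$ is independent of the density; substituting $C_D' = 0$ into \eqref{eq:t_crit} gives $t_* = 0$.

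For the third claim, I would invoke the classical Aronson upper bound for the fundamental solution of a uniformly parabolic operator with ellipticity $c_0^2 \le D \le C_0^2$, namely $\Gamma(t,x;s,y) \le C_A^0 (t-s)^{-1/2}\exp\bigl(-c|x-y|^2/(t-s)\bigr)$, with $C_A^0$ depending only on the ellipticity bounds. Convolving against the initial datum yields $\norm*{u(t,\cdot)}_{L^\infty} \le C_A^0 \, m \, t^{-1/2}$, where $m = \norm*{u_0}_{L^1}$. Hence the effective Aronson constant entering the bootstrap in Proposition \ref{prop:smoothing} scales linearly with $m$, and the squared dependence on $C_A$ in \eqref{eq:t_crit} gives $t_* = O(m^2)$, which is small for $m \ll 1$.

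No part of the argument presents a genuine obstacle; the corollary is essentially a transparency exercise on how the critical time depends on the structural parameters of the driver and on the total mass. The only mild subtlety lies in the second claim: since $c_h < \infty$ is imposed by Assumption \ref{ass:h_theta}(iii), the configuration $C_D' = 0$ is not realized within the standing hypothesis class, so the instantaneous-smoothing statement is best read as a formal limiting assertion describing the behavior of $t_*$ as $c_h \to \infty$, rather than as an achievable configuration.
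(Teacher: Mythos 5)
Your proposal is correct and takes essentially the same approach as the paper: the corollary carries no separate proof in the text and is intended as a direct reading of the formula $t_* = \bigl(C_D' C_A/c_0^2\bigr)^2$ from Proposition \ref{prop:smoothing}, which is precisely what you do, including the observations that $C_D' = 2C_0/c_h \to 0$ as $c_h \to \infty$ (so the second claim is a formal limit rather than an achievable configuration, since Assumption \ref{ass:h_theta}(iv) forces $h_\theta^{-1}(t,x,0) \ge c_0 > 0$ and hence $\partial_v D > 0$), and that the Aronson bound is really $\norm{u(t,\cdot)}_{L^\infty} \le C_A t^{-1/2}\norm{u_0}_{L^1}$ so that the effective constant in \eqref{eq:t_crit} scales linearly in $m = \norm{u_0}_{L^1}$, giving $t_* = O(m^2)$.
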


\begin{proof}[Proof of Proposition \ref{prop:smoothing}]
The strategy is to first establish a uniform-in-$\varepsilon$ $L^1 \to L^\infty$ bound on $u^\varepsilon$ via classical parabolic theory, and then show that our stability condition becomes effective for all times $t > t_*$, enabling the $H^1$ energy estimate.

\paragraph{Step 1: Uniform Parabolicity and $L^1 \to L^\infty$ Bound.}
The diffusion coefficient $D^\varepsilon(t,x) = (h_\theta^{-1}(t,x, (K_\varepsilon*u^\varepsilon)(t,x)))^2$ is uniformly elliptic. As $u^\varepsilon$ is a probability density, $(K_\varepsilon*u^\varepsilon) \ge 0$. By Assumption \ref{ass:h_theta}(iv) and the monotonicity of $h_\theta^{-1}$, the diffusion coefficient is uniformly parabolic: $0 < c_0^2 \le D^\varepsilon(t,x) \le C_0^2$.
The equation $\partial_t u^\varepsilon = \frac{1}{2} \partial_{xx}[D^\varepsilon u^\varepsilon]$ is a quasilinear, non-local, but uniformly parabolic equation. A foundational result by Aronson provides an instantaneous $L^1 \to L^\infty$ regularization effect for solutions to such equations (see, e.g., \cite{DiBenedetto1993}). This guarantees the existence of a universal constant $C_A$, which depends only on the dimension (1 here) and the ellipticity constants $c_0^2$ and $C_0^2$ (and is therefore independent of $\varepsilon$), such that for any $t > 0$:
\begin{equation}\label{eq:l_inf_smoothing_simple}
    \norm*{u^\varepsilon(t,\cdot)}_{L^\infty(\R)} \le C_A t^{-1/2} \norm*{u_0}_{L^1} = C_A t^{-1/2}.
\end{equation}
This gives us a time-dependent bound on the maximum density, which decays like the heat kernel.

\paragraph{Step 2: Time-Dependent Energy Estimate and the Role of $t_*$.}
We revisit the $H^1$ energy estimate from the proof of Lemma \ref{lem:energy_estimates_H1}. The rate of change of the energy $I(t) = \frac{1}{2}\int_\R (\partial_x u^\varepsilon)^2 dx$ was bounded by:
\[ \frac{dI(t)}{dt} \le \left( -\frac{c_0^2}{2} + \frac{C_D' \norm*{u^\varepsilon(t,\cdot)}_{L^\infty}}{2} \right) \norm*{\partial_{xx} u^\varepsilon}_{L^2}^2 + (\text{lower order terms}). \]
where $C_D' := 2C_0/c_h$. Now, instead of a constant $L^\infty$ bound $M$, we use the time-dependent Aronson bound from \eqref{eq:l_inf_smoothing_simple}. For the energy estimate to be coercive (i.e., for the coefficient of the highest order term to be negative), we need:
\[ -\frac{c_0^2}{2} + \frac{C_D' \norm*{u^\varepsilon(t,\cdot)}_{L^\infty}}{2} < 0 \quad \implies \quad -c_0^2 + C_D' (C_A t^{-1/2}) < 0. \]
Rearranging this inequality defines the time interval where our $H^1$ energy method is guaranteed to work:
\[ c_0^2 > C_D' C_A t^{-1/2} \iff t^{1/2} > \frac{C_D' C_A}{c_0^2} \iff t > \left( \frac{C_D' C_A}{c_0^2} \right)^2 =: t_*. \]

\paragraph{Step 3: Passage to the Limit on $(t_*, T]$.}
For any fixed $\tau \in (t_*, T]$, the coefficient of $\norm*{\partial_{xx} u^\varepsilon}_{L^2}^2$ in the energy estimate is uniformly negative for all $t \in [\tau, T]$. This allows us to run the argument of Lemma \ref{lem:energy_estimates_H1} on the time interval $[\tau, T]$. The uniform bound on $I(t)$ for $t \in [\tau, T]$ will now depend on $I(\tau)$. Although $I(\tau)$ is not known to be uniformly bounded, integrating the differential inequality shows that the negative damping term controls the growth. A more careful analysis of the Grönwall-type inequality on $[\tau,T]$ shows that $I(t)$ is uniformly bounded for $t \in [\tau,T]$, with a bound depending on $\tau$ but not on $\varepsilon$.

The family $(u^\varepsilon)_{\varepsilon>0}$ is thus uniformly bounded in $L^\infty([\tau,T]; H^1(\R) \cap L^\infty(\R))$ for any $\tau > t_*$. The compactness argument of Section 4 can be applied on the interval $[\tau, T]$, yielding a limiting solution $u$ that is regular on $(t_*,T]$. Since $\tau > t_*$ was arbitrary, the result holds on the entire open interval $(t_*,T]$.
\end{proof}

\section{Conceptual Extension to Singular Integral-Based Interactions}
\label{sec:extension}

The philosophy of using a regular implicit driver to generate and then control a singular interaction can be conceptually extended beyond the density-dependent volatility case. This section outlines this pathway for two canonical systems with integral-based singularities: the 2D vortex model and the Keller-Segel model of chemotaxis. The key idea is to shift the implicit definition from a simple algebraic equation to an auxiliary partial differential equation (PDE) for the interaction drift, where the particle measure acts as a source term.

\subsection{A General Framework via Auxiliary PDEs}
The common structure is that the interaction term (a drift or velocity field) $b(x,\mu)$ acting on a particle at position $x$ is given by a singular integral: $b(x,\mu) = (K * \mu)(x)$. Our framework re-interprets this interaction. We find an operator $\mathcal{L}$ such that $b$ is the solution to an auxiliary PDE:
\[ \mathcal{L}b = \mu. \]
The interaction is then defined implicitly by this PDE. Our regularization strategy is to mollify the source term:
\begin{enumerate}
    \item \textbf{Identify the Implicit Structure:} Re-interpret the singular interaction $b(x,\mu) = (K * \mu)(x)$ as the solution to an auxiliary PDE, $\mathcal{L}b = \mu$, where $\mu$ is the source.
    \item \textbf{Regularize the Input:} Define a regularized interaction field $b^\varepsilon(x, \mu)$ as the solution to the same PDE but with a mollified source term: $\mathcal{L}b^\varepsilon = (K_\varepsilon * \mu)$. By standard elliptic or parabolic theory, if $K_\varepsilon * \mu$ is smooth, $b^\varepsilon$ will be regular.
    \item \textbf{Derive Uniform Estimates:} The main challenge is to study the limiting kinetic PDE for the regularized density $u^\varepsilon$ and derive a priori estimates for $u^\varepsilon$ that are independent of $\varepsilon$.
\end{enumerate}

\subsection{Application to the 2D Viscous Vortex Model}

The dynamics of $N$ point vortices in a 2D fluid with viscosity are given by:
\begin{equation}
    \dd M^{i,N}_t = \left( \frac{1}{N} \sum_{j \neq i} K_{BS}(M^{i,N}_t - M^{j,N}_t) \right) \dd t + \sqrt{2\nu} \dd W^i_t,
\end{equation}
where $M^{i,N}_t \in \R^2$ and $K_{BS}(x) = \frac{1}{2\pi} \frac{x^\perp}{\abs{x}^2}$ is the singular Biot-Savart kernel, with $x^\perp = (-x_2, x_1)$.

\paragraph{Implicit Framework via Div-Curl System.}
The mean-field velocity field $b(x,\mu) = (K_{BS} * \mu)(x)$ is the unique divergence-free vector field whose curl is the vorticity measure $\mu$. This gives the implicit definition via the div-curl system:
\begin{equation} \label{eq:div_curl}
    \nabla \cdot b = 0 \quad \text{and} \quad \nabla^\perp \cdot b = \mu,
\end{equation}
where $\nabla^\perp \cdot b = \partial_{x_1} b_2 - \partial_{x_2} b_1$.

\paragraph{Regularization and Limiting PDE.}
Following our framework, we define a regularized drift field $b^\varepsilon(x, \mu)$ as the solution to:
\begin{equation} \label{eq:div_curl_reg}
    \nabla \cdot b^\varepsilon = 0 \quad \text{and} \quad \nabla^\perp \cdot b^\varepsilon = (K_\varepsilon * \mu).
\end{equation}
Since $K_\varepsilon * \mu$ is smooth, standard elliptic theory guarantees that $b^\varepsilon$ is also smooth. The mean-field limit for the regularized system is a Vlasov-Fokker-Planck equation where the velocity field is coupled to the mollified density $u^\varepsilon$:
\begin{equation}
    \partial_t u^\varepsilon + \nabla \cdot (b^\varepsilon[u^\varepsilon] u^\varepsilon) = \nu \Delta u^\varepsilon,
\end{equation}
where the drift $b^\varepsilon[u^\varepsilon]$ is defined implicitly as the solution to the regularized div-curl system with source $u^\varepsilon$. The core analytical challenge, and a significant open problem, would be to derive uniform-in-$\varepsilon$ a priori estimates for $u^\varepsilon$ in an appropriate function space (e.g., $L^p$ spaces). Success would provide a new, constructive proof of well-posedness for the 2D Navier-Stokes equations in vorticity form.

\subsection{Application to the Keller-Segel Model of Chemotaxis}

In the classical Keller-Segel model, cells are attracted to a chemical substance that they secrete. The interaction drift is $b(x,\mu) = \chi \nabla (\Phi * \mu)$, where $\Phi(x) \propto -\log\abs{x}$ is the Newtonian potential in 2D, and $\chi>0$ is the chemotactic sensitivity.

\paragraph{Implicit Framework via Poisson Equation.}
The drift $b$ is defined via the chemical potential $\phi = \Phi * \mu$. This potential is the solution to the Poisson equation with the cell density $\mu$ as the source:
\begin{equation}
    -\Delta \phi = c_d \mu,
\end{equation}
for a dimension-dependent constant $c_d$ (e.g., $c_d=2\pi$ in 2D). The drift is then $b = \chi \nabla \phi$. This elliptic PDE serves as our implicit driver.

\paragraph{Regularization and Connection to Critical Mass.}
We regularize the source term. Let $\phi^\varepsilon$ solve $-\Delta \phi^\varepsilon = c_d (K_\varepsilon * u^\varepsilon)$. The regularized drift is $b^\varepsilon = \chi \nabla \phi^\varepsilon$, which is smooth. The regularized mean-field PDE for the density $u^\varepsilon$ becomes the well-known parabolic-elliptic system:
\begin{equation}
\begin{cases}
    \partial_t u^\varepsilon = \nabla \cdot \left( D \nabla u^\varepsilon - \chi u^\varepsilon \nabla \phi^\varepsilon \right), \\
    -\Delta \phi^\varepsilon = c_d u^\varepsilon.
\end{cases}
\end{equation}
In the final system, one can show that using $u^\varepsilon$ or $K_\varepsilon*u^\varepsilon$ on the right-hand side of the Poisson equation leads to the same limit. It is well-known that this system may exhibit finite-time blow-up if the initial mass $M = \int u_0 dx$ is above a critical threshold ($M_c = 8\pi D/\chi$ in 2D). An analogue of our Stability Condition \ref{ass:h_theta}(v) would likely correspond to the sub-critical mass regime, $M < M_c$. In this regime, solutions are known to exist globally. The central challenge for applying our framework here would be to derive uniform-in-$\varepsilon$ estimates for solutions $u^\varepsilon$ in the sub-critical case, likely by exploiting the logarithmic Hardy-Littlewood-Sobolev inequality that governs the competition between diffusion and aggregation. Success would provide a novel, constructive proof of global existence for the classical Keller-Segel system, starting from the particle level and passing through our regularized driver framework.

\section{Conclusion}
\label{sec:conclusion}

In this paper, we have rigorously established propagation of chaos for a class of interacting particle systems with a singular, density-dependent volatility. The central innovation is the introduction of a framework where the singular system is viewed as the emergent limit of a sequence of regular systems, whose dynamics are defined implicitly by a well-behaved driver function. This structure was instrumental in deriving the uniform-in-regularization a priori estimates that form the cornerstone of the proof, allowing us to bridge the gap between regular and singular dynamics.

\appendix

\section{Technical Preliminaries}
\label{sec:appendix}

For the convenience of the reader, this appendix recalls the definitions and statements of some of the key analytical tools used in the main body of the paper.

\subsection{The Wasserstein Space \texorpdfstring{$\Pcal_2(\R)$}{P2(R)}}
\label{app:wasserstein}
Let $(\mathcal{X}, d)$ be a Polish space. The space of Borel probability measures on $\mathcal{X}$ is denoted by $\Pcal(\mathcal{X})$. For $p \ge 1$, we fix a reference point $x_0 \in \mathcal{X}$ and define the space of probability measures with finite $p$-th moment as
\[ \Pcal_p(\mathcal{X}) = \left\{ \mu \in \Pcal(\mathcal{X}) : \int_\mathcal{X} d(x, x_0)^p \mu(\dd x) < \infty \right\}. \]
The choice of $x_0$ does not affect the definition of the space. The $p$-Wasserstein distance between two measures $\mu, \nu \in \Pcal_p(\mathcal{X})$ is defined as
\[ W_p(\mu, \nu) = \left( \inf_{\gamma \in \Pi(\mu, \nu)} \int_{\mathcal{X} \times \mathcal{X}} d(x,y)^p \gamma(\dd x, \dd y) \right)^{1/p}, \]
where $\Pi(\mu, \nu)$ is the set of all transport plans (or couplings) between $\mu$ and $\nu$; that is, $\gamma$ is a probability measure on the product space $\mathcal{X} \times \mathcal{X}$ whose first marginal is $\mu$ and second marginal is $\nu$.

The space $(\Pcal_p(\mathcal{X}), W_p)$ is itself a Polish space. A crucial property is that convergence in the $W_p$ metric is equivalent to weak convergence of measures plus convergence of the $p$-th moments: $\mu_n \to \mu$ in $(\Pcal_p(\mathcal{X}), W_p)$ if and only if $\mu_n \rightharpoonup \mu$ weakly and $\int d(x,x_0)^p d\mu_n \to \int d(x,x_0)^p d\mu$. In our case, $\mathcal{X}=\R$ and $p=2$.

\subsection{The Aubin-Lions-Simon Compactness Theorem}
\label{app:aubin_lions}
The Aubin-Lions-Simon theorem is a fundamental tool for proving the existence of solutions to non-linear evolutionary PDEs by providing compactness for families of approximate solutions. It provides compactness in a function space whose norms control both spatial and temporal regularity. We state a version from  \cite{Simon1986} that is well-suited for our application.

\begin{theorem}[Aubin-Lions-Simon]
Let $X_0, X, X_1$ be three Banach spaces such that $X_0 \subset X \subset X_1$ with continuous injections. Suppose the injection from $X_0$ to $X$ is compact. Let $p, q \in [1, \infty]$.
Let $F$ be a family of functions $u$ such that:
\begin{enumerate}
    \item $F$ is bounded in the space $L^p([0,T]; X_0)$.
    \item The family of time derivatives $\{\partial_t u : u \in F\}$ is bounded in the space $L^q([0,T]; X_1)$.
\end{enumerate}
Then the family $F$ is relatively compact in the space $L^p([0,T]; X)$. If $p=\infty$ and $q>1$, the conclusion holds that $F$ is relatively compact in $C([0,T]; X)$.
\end{theorem}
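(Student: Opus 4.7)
The plan is to deploy the Kolmogorov--Riesz--Fréchet compactness criterion in $L^p([0,T]; X)$, combined with Ehrling's lemma (the functional-analytic encoding of the compact embedding $X_0 \hookrightarrow X$). Ehrling's lemma states that for every $\eta > 0$ there exists $C_\eta > 0$ such that $\|v\|_X \le \eta \|v\|_{X_0} + C_\eta \|v\|_{X_1}$ for all $v \in X_0$; I would establish this first by a short contradiction argument that extracts a bounded sequence in $X_0$ violating the inequality, then uses compactness of $X_0 \hookrightarrow X$ to reach an absurdity. This inequality is the mechanism that converts boundedness in the finer space $X_0$ together with controlled oscillations in the coarser space $X_1$ into genuine compactness in the intermediate space $X$.

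To invoke Kolmogorov--Riesz in $L^p([0,T]; X)$, one needs: (a) $F$ bounded in $L^p([0,T]; X)$, which follows immediately from its boundedness in $L^p([0,T]; X_0)$ together with the continuous injection $X_0 \hookrightarrow X$; and (b) uniform equicontinuity of time translations, $\sup_{u \in F} \|\tau_h u - u\|_{L^p([0,T-h]; X)} \to 0$ as $h \to 0$, where $\tau_h u(t) = u(t+h)$. For (b), I would apply Ehrling pointwise in $t$ to $v = u(t+h) - u(t) \in X_0$ and integrate:
\[ \|\tau_h u - u\|_{L^p([0,T-h]; X)} \le \eta \, \|\tau_h u - u\|_{L^p([0,T-h]; X_0)} + C_\eta \, \|\tau_h u - u\|_{L^p([0,T-h]; X_1)}. \]
The first summand is bounded by $2\eta \|u\|_{L^p([0,T]; X_0)}$, uniformly in $F$, and so a small choice of $\eta$ suppresses it. For the $X_1$ contribution, I would write $u(t+h) - u(t) = \int_t^{t+h} \partial_t u(s) \, ds$ in the Bochner sense and use Hölder in time with exponent $q > 1$ to obtain $\|u(t+h) - u(t)\|_{X_1} \le h^{1-1/q} \bigl( \int_t^{t+h} \|\partial_t u(s)\|_{X_1}^q \, ds \bigr)^{1/q}$. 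Taking $p$-th powers, integrating in $t$, and invoking Fubini yields $\|\tau_h u - u\|_{L^p([0,T-h]; X_1)} \lesssim h^{\alpha} \|\partial_t u\|_{L^q([0,T]; X_1)}$ for some $\alpha > 0$, uniformly on $F$. Choosing $h$ small then completes (b), and Kolmogorov--Riesz delivers relative compactness in $L^p([0,T]; X)$.

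The endpoint case $p = \infty$, $q > 1$ requires instead an Arzelà--Ascoli argument on $C([0,T]; X)$. The same Bochner--Hölder estimate directly furnishes equicontinuity of $F$ in $X_1$, uniform boundedness in $X_0$ combined with the compact embedding gives pointwise relative compactness in $X$, and Ehrling upgrades the equicontinuity from $X_1$ to $X$; Arzelà--Ascoli then closes the case. The main technical subtlety, which the stated $q > 1$ hypothesis sidesteps but which I would flag, is the borderline case $q = 1$: the Hölder argument yields no explicit modulus of continuity, and one must instead appeal to the Dunford--Pettis theorem to extract equi-integrability of $\{\partial_t u : u \in F\}$ in $L^1([0,T]; X_1)$, from which $\tau_h u - u \to 0$ in $L^1(X_1)$ uniformly in $F$ can still be deduced. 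The hardest part of the whole proof is not any single estimate but the careful interpolation orchestration: Ehrling must be applied with $\eta$ chosen \emph{before} the translation parameter $h$, and the interplay between the two exponents $p$ and $q$ must be handled so that the $X_1$-rate survives the $L^p$ integration in time.
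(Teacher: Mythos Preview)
The paper does not prove this theorem. It is stated in Appendix~\ref{app:aubin_lions} purely as a tool, with a citation to \cite{Simon1986}, and no argument is given. There is therefore no paper-specific proof to compare your proposal against.

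Your sketch is essentially the standard route to Aubin--Lions--Simon as it appears in the literature (including in Simon's paper): Ehrling's lemma to interpolate between $X_0$ and $X_1$, then Kolmogorov--Riesz--Fr\'echet for the $L^p$ case via control of time translates, and Arzel\`a--Ascoli for the $C([0,T];X)$ endpoint. The orchestration you describe---fix $\eta$ in Ehrling first, then choose $h$ small---is the right order of quantifiers, and the Bochner--H\"older estimate for $q>1$ is correct. One small inaccuracy worth flagging: for $q=1$ you invoke Dunford--Pettis to ``extract equi-integrability,'' but boundedness in $L^1$ alone does not yield equi-integrability; Dunford--Pettis characterizes weak compactness, which is not automatic here. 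Simon's treatment of $q=1$ proceeds differently, via a direct averaging argument on $\int_t^{t+h}\|\partial_t u\|_{X_1}\,ds$ combined with the $L^p(X_0)$ bound rather than any weak-compactness machinery. This is a side remark, since you correctly flag $q=1$ as the delicate borderline and the paper's application uses $q=2$ anyway.
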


In our application in the proof of Proposition \ref{prop:compactness}, we used this theorem with:
\begin{itemize}
    \item $p=2$, $q=2$.
    \item $X_0 = H^2(\R)$.
    \item $X = H^1(\R)$.
    \item $X_1 = L^2(\R)$.
\end{itemize}
The embedding $H^2(\R) \hookrightarrow H^1(\R)$ is continuous. Crucially, while this embedding is not compact on $\R$, it is compact on any bounded domain $\Omega \subset \R$ (Rellich-Kondrachov theorem). The uniform bounds in $L^2([0,T]; H^2(\R))$ and for the time derivative in $L^2([0,T]; L^2(\R))$ are strong enough to apply a standard localization argument (i.e., multiplying by a smooth cutoff function) to show relative compactness in $L^2([0,T]; H^1_{loc}(\R))$. As explained in the proof, this is upgraded to global compactness in $L^2([0,T]; H^1(\R))$ by establishing tightness from the uniform $H^1$ bound.

\bibliographystyle{plainnat}
\bibliography{reference}

\end{document}